\documentclass[a4paper,11pt,twoside]{amsart}
\usepackage[english]{babel}
\usepackage[utf8]{inputenc}

\usepackage[a4paper,inner=3cm,outer=3cm,top=4cm,bottom=4cm,pdftex]{geometry}
\usepackage{fancyhdr}
\usepackage{fancyhdr}
\usepackage{titlesec}
\usepackage{enumerate}
\usepackage{color}
\usepackage{bold-extra}
\titleformat{\section}{\normalfont\scshape\centering}{\thesection}{1em}{}
\titleformat{\subsection}{\bfseries}{\thesubsection}{1em}{}

\usepackage{color}
\usepackage{bold-extra}
\usepackage{ mathrsfs }
\usepackage{bm}

\usepackage{comment}
\usepackage{graphics}
\usepackage{aliascnt}
\usepackage[pdftex,citecolor=green,linkcolor=red]{hyperref}

\usepackage{amsmath}
\usepackage{amsfonts}
\usepackage{amssymb}
\usepackage{amsthm}
\usepackage{comment}
\usepackage{mathtools}

\newtheorem{theorem}{Theorem}[section]
\newtheorem{corollary}[theorem]{Corollary}

\newtheorem{lemma}[theorem]{Lemma}
\newtheorem{proposition}[theorem]{Proposition}
\theoremstyle{definition}
\newtheorem{definition}[theorem]{Definition}
\newtheorem{remark}[theorem]{Remark}

\numberwithin{equation}{section}

\renewcommand\d{\textnormal{d}}

\newcommand\Z{\mathbb{Z}}

\newcommand{\probP}{\mathscr{P}}

\newcommand{\norm}[1]{\|#1\|}

\DeclareMathOperator{\lcm}{lcm}

\title{Extremal Problems for GCDs and LCMs in Higher Dimensions}
\author{Haozhe Gou}
\address{School of Mathematics, Shandong University, Jinan 250100, China}
\address{D\'epartement de math\'ematiques et de statistique, Universit\'e de Montr\'eal, C.P.~6128, succ.~Centre-ville, Montr\'eal, QC H3C~3J7, Canada}
\email{haozhegou@gmail.com}

\begin{document}

\maketitle
\begin{abstract}
We study extremal problems for tuples of integers chosen from sets $A_i \subset [X_i,2X_i]$ for $1\le i\le k$, under large GCD and small LCM conditions.
For the GCD problem, we extend the work of Green and Walker to higher dimensions. Specifically, for $k\ge 3$, if $\gcd(a_1,\dots,a_k)\ge D$ for at least a proportion $\delta$ of the tuples in $\prod_{i=1}^k A_i$, then
\[
\prod_{i=1}^k |A_i|
\ll_{k,\varepsilon}
\delta^{-k/(k-1)-\varepsilon}
\frac{\prod_{i=1}^k X_i}{D^k}.
\]
The proof is based on a minimal counterexample argument and a new high-dimensional measure concentration lemma. We also establish a large sieve-type inequality to obtain a complementary estimate for the GCD problem.

For the LCM problem, we use a quite different method to show that, for all $k\ge 2$,
\[
\prod_{i=1}^k |A_i|
\ll_{k,\varepsilon}
\delta^{-k/(k-1)}
\frac{L^{k/(k-1)+\varepsilon}}
{\bigl(\prod_{i=1}^k X_i\bigr)^{1/(k-1)}},
\]
whenever $\lcm(a_1,\dots,a_k)\le L$ for at least a proportion $\delta$ of the $k$-tuples in $\prod_{i=1}^k A_i$. Finally, we show that these bounds are essentially best possible up to $\varepsilon$-losses in the exponent.
\end{abstract}

\section{Introduction}
Let $A \subset [X, 2X]$ and $B \subset [Y, 2Y]$ be sets of integers with $1 \le D \le \min(X, Y)$. We are interested in the maximal possible size of the product $|A||B|$ under the hypothesis that a proportion of pairs $(a,b) \in A \times B$ have a large greatest common divisor (GCD). This condition is formulated as follows: for some $\delta \in (0,1]$,
$$ |\left\{ (a,b) \in A \times B : \gcd(a,b) \ge D \right\}| \ge \delta|A||B|.  $$

This problem has recently been solved. Building upon the breakthrough methods developed by Koukoulopoulos and Maynard in \cite{KoukoulopoulosMaynard} in their work on the Duffin--Schaeffer conjecture, Green and Walker \cite{GreenWalker} established a uniform bound for this problem, showing that $|A||B| \ll_{\varepsilon} \delta^{-2-\varepsilon} XY/D^2$ for any $\varepsilon > 0$; this bound is sharp up to $\delta^{-\varepsilon}$. Their proof is highly structural, essentially relying on a delicate inductive argument on the number of prime factors of the elements in $A \cup B$.

A natural direction  is to explore this phenomenon in higher dimensions. This leads us to the central problem of this paper: for $k \ge 3$, what is the optimal upper bound on the product of sizes $\prod_{i=1}^k|A_i|$ for sets $A_i \subset [X_i, 2X_i]$, given that
\begin{equation*}
    |\{ (a_1,\dots,a_k) \in \prod_{i=1}^k A_i : \gcd(a_1,\dots,a_k) \ge D \}| \ge \delta \prod_{i=1}^k|A_i|?
\end{equation*}

Our first main result gives an essentially sharp bound for this high-dimensional GCD
problem.

\subsection{High-Dimensional GCD Problems}
For a set of integers $S$, let $\probP(S)$ denote the set of prime factors of elements of $S$.
Let
\[
\probP_{\mathrm{small}}(S):=\{p\in \probP(S): p\le p_0\},
\]
where $p_0$ is a sufficiently large constant depending on $\varepsilon$ and $k$.

\begin{theorem}[High-Dimensional GCD]\label{thm:main}
    Let $k \geq 3$, $X_1, \dots, X_k, D \in [1, \infty)$ with $D \leq \min(X_1, \dots, X_k)$, and $\delta \in (0,1]$. Let $A_i \subset [X_i, 2X_i]$ ($1 \leq i \leq k$) be integer sets such that $\gcd(a_1,\dots,a_k) \ge D$ for at least $\delta\prod_{i=1}^k|A_i|$ tuples $(a_1,\dots,a_k)$ in $\prod_{i=1}^k A_i$.
    Then for any $\varepsilon \in (0,1)$, we have
    \begin{equation}\label{eq:main result}
            \prod_{i=1}^k|A_i| \leq C_k^{1 + \#\probP_{\text{small}}\left(\bigcup_{i=1}^k A_i\right)}\delta^{-\frac{k+{\varepsilon}/(k-1)}{k-1}}   \frac{\prod_{i=1}^k X_i}{D^k},
    \end{equation}
where $C_k=2^{4k}.$
In particular, the following simplified bound holds:
    \begin{align*}
    \prod_{i=1}^k|A_i| 
    \ll_{\varepsilon,k}\delta^{-\frac{k}{k-1}-{\varepsilon}}   \frac{\prod_{i=1}^k X_i}{D^k}.
    \end{align*}
\end{theorem}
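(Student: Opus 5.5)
We argue by a minimal counterexample, in the spirit of Green and Walker, inducting on the total number of prime factors of $\bigcup_i A_i$ (counted with multiplicity after a suitable normalisation). Suppose the theorem fails, and fix a counterexample $(A_1,\dots,A_k)$ for which the quantity $\#\probP(\bigcup_i A_i)$ is minimal; among those, take one where the ratio
\[
\prod_i |A_i|\Big/ \Bigl(\delta^{-\frac{k+\varepsilon/(k-1)}{k-1}}\prod_i X_i/D^k\Bigr)
\]
is largest. The base case, where all elements have no prime factors in common beyond trivial ones, is handled directly. If the GCD condition holds with $D$ larger than $1$, some common prime must occur.

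\textbf{Step 1: splitting on a prime.} Pick a prime $p\in\probP(\bigcup A_i)$ and write $A_i=\bigsqcup_{\nu\ge0}A_i^{(\nu)}$ according to the exact power $p^\nu\,\|\,a$. For a tuple $(\nu_1,\dots,\nu_k)$, the good tuples in $\prod_i A_i^{(\nu_i)}$ have $\gcd\ge D$. Dividing out $p^{\min\nu_i}$ gives sets $A_i^{(\nu_i)}/p^{\nu_i}\subset[X_i/p^{\nu_i},2X_i/p^{\nu_i}]$ with no factor $p$ and gcd at least $D/p^{\min\nu_i}$. These have fewer prime factors, so the minimality hypothesis applies to each block with its own density $\delta_{\nu}$. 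Setting $\alpha_{i,\nu}=|A_i^{(\nu)}|/|A_i|$, the induction gives an inequality of the form
\[
\delta\le\sum_{\nu\in\mathbb{Z}_{\ge0}^k}\prod_i\alpha_{i,\nu_i}\,\delta_\nu,\qquad
\delta_\nu\le \Bigl(C\,p^{\,k\min\nu_j-\sum_j\nu_j}\prod_i\alpha_{i,\nu_i}^{-1}\cdot\rho\Bigr)^{(k-1)^2/(k(k-1)+\varepsilon)},
\]
where $\rho<1$ is the ratio by which the counterexample exceeds the bound. The factor $C_k$ is lost once per small prime; this is why the statement carries $C_k^{1+\#\probP_{\mathrm{small}}}$. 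For large $p$ no constant may be lost.

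\textbf{Step 2: high-dimensional measure concentration lemma.} One must show that these inequalities are contradictory unless the $\alpha_{i,\cdot}$ are essentially concentrated on a single $\nu$ for all $i$ simultaneously. In that case we pass to that single block, which contradicts minimality. This requires an inequality of the form
\[
\sum_\nu \prod_i\alpha_{i,\nu_i}^{\theta}\,p^{-\theta(\sum\nu_j-k\min\nu_j)}\le 1,\qquad \theta=\tfrac{k-1}{k}\text{-ish},
\]
with equality forcing concentration. Here the exponent $k/(k-1)$ arises exactly as the threshold where Hölder's inequality applied to $\prod_i\alpha_{i,\nu_i}^{1-1/k}$ balances: summing over $k$ indices with weight $\prod\alpha^{(k-1)/k}$ is bounded by $\prod_i(\sum_\nu\alpha_{i,\nu})^{(k-1)/k}$ only after the off-diagonal $p$-decay is used. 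The extra $\varepsilon/(k-1)$ in the exponent provides slack, so that the off-diagonal terms ($\nu$ not all equal) are summable for $p>p_0$ large. For small $p$ this slack is paid for with the constant $C_k=2^{4k}$.

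\textbf{Step 3: concluding.} If the mass is not concentrated, the lemma gives strict inequality and hence a contradiction with the maximality of $\rho$. If it is concentrated on $\nu$ for every prime in the support, then every element is divisible by the same prime powers; dividing them out reduces $X_i$ and $D$ by the same factor, and the bound is invariant. Iterating, we reach gcd $1$, so $D\le 1$, and the bound holds trivially because $|A_i|\le X_i+1$. The simplified form follows because $\#\probP_{\mathrm{small}}\le\pi(p_0)$ and $\delta^{-\varepsilon/(k-1)^2}\le\delta^{-\varepsilon}$.

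I expect the main obstacle to be Step 2. The two-dimensional Green--Walker lemma exploits a quadratic structure, whereas here the off-diagonal weight $p^{k\min\nu-\sum\nu}$ couples $k$ distributions. My first attempt would be to fix the minimum coordinate and apply Hölder in the remaining coordinates, reducing the problem to a one-variable concentration estimate.
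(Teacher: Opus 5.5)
\textbf{There is a genuine gap: the Step 2 concentration target is false in the form your endgame needs, and the endgame does not close.}

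Your Step 1 matches the paper. That is: minimality in the set of primes, localisation at $p$, and reading the blocks' minimality as the pointwise constraint $\mu_p(\bm t)\le c\,p^{-\norm{\bm t}_{\mathrm{GCD}}/q}\prod_i\alpha_{i,t_i}^{1/q'}$, with $q=\frac{k+\varepsilon/(k-1)}{k-1}$ and $c<1$ only when $p\le p_0$.

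The trouble is what you extract from it. The constraint does \emph{not} force $\mu_p$ onto a single point up to an error summable over primes. The $k$ neighbours $m\bm{1}+\bm{e}_j$ have $\norm{\bm t}_{\mathrm{GCD}}=1$. For example, take $\alpha_{i,0}=1-p^{-1}$, $\alpha_{i,1}=p^{-1}$, $\mu(\bm{e}_j)=p^{-1}(1-p^{-1})^{k-1}$ and $\mu(\bm 0)=1-kp^{-1}(1-p^{-1})^{k-1}$. This satisfies the constraint with $c=1$ for all large $p$; it is essentially how multiples of $N$ look at a prime $p\nmid N$. So the mass off a single point is genuinely of size $\asymp 1/p$, and $\sum_p 1/p$ diverges.

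Consequently, ``every element is divisible by the same prime powers'' does not follow. Passing to the diagonal block prime by prime does not contradict minimality either. Translating that block's bound back only gives $\prod_i|A_i|\le C_k^{1+\#\probP_{\mathrm{small}}}\delta^{-q}\,\mu_p(\nu)^{-q}\prod_i\alpha_{i,\nu_i}^{q-1}\prod_iX_i/D^k$. The extra factor is $\ge 1$, since this is just the constraint read backwards, and these factors accumulate without bound over the primes. The maximality of $\rho$ cannot rescue this: the blocks have fewer primes, so they are not counterexamples at all.

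\textbf{What is missing: keep the neighbours, and finish with one global count instead of an iteration.}

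The paper's lemma concentrates $\mu_p$ on $\mathcal S_m=\{m\bm 1\}\cup\{m\bm 1+\bm e_j:1\le j\le k\}$, with mass outside $\ll p^{-k(q-1)/q}\le p^{-1-\varepsilon/(k+1)}$. The $\varepsilon$-slack is really spent in two places: making this exponent exceed $1$, and ensuring $q'<k$, which gives the diagonal gain $(1-q'/k)\gamma$. The companion bound $c\ge\frac1k(1-\lambda(k))^{k-1}$ comes from exactly your ``fix the minimum and apply H\"older'' computation. It shows that no prime $p\le p_0$ divides a minimal counterexample.

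A union bound over $p>p_0$ then gives $\Omega'\subset\Omega$ with $|\Omega'|\ge\frac12|\Omega|$. On $\Omega'$, for every prime simultaneously, $v_p(a_i)=v_p(N)$ for all $i$ except at most one, where it equals $v_p(N)+1$. Hence $a_i=Na_i'$ with the $a_i'$ squarefree and pairwise coprime, so $\gcd(a_1,\dots,a_k)=N\ge D$ exactly. Counting $a_i'\in[X_i/N,2X_i/N]$ gives $\frac{\delta}{2}\prod_i|A_i|\le 2^k\prod_i X_i/D^k$, which contradicts the counterexample since $C_k=2^{4k}>2^{k+1}$. The neighbours are harmless precisely because at most one coordinate picks up the extra factor of $p$.

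Separately, your Step 2 inequality has the exponents mixed up. The weights are $\alpha_{i,\nu_i}^{1/q'}$ with $1/q'\approx 1/k$, and $p^{-\norm{\nu}_{\mathrm{GCD}}/q}$ with $1/q\approx (k-1)/k$. The diagonal H\"older step uses $\ell^{q'}\subset\ell^k$.
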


\begin{remark}
First, it is worth noting that while the simplified bound is what we are really interested in, the explicit inequality and precise exponents in \eqref{eq:main result} are essential to reach a contradiction in our proof.
   Second, we have stated our main theorem only for dimensions $k \ge 3$, because of some technical reasons in the high-dimensional measure concentration lemma (see Lemma \ref{lem:measure-concentration} below). However,  for $k=2$,  the bound reads
    \[
    |A_1||A_2| \ll_{\varepsilon} \delta^{-2-\varepsilon} \frac{X_1 X_2}{D^2},
    \]
    which coincides precisely with the bound established by Green and Walker \cite{GreenWalker}. 
\end{remark}
To appreciate the strength of Theorem \ref{thm:main}, we compare it with a trivial counting bound. Let $\Omega \subseteq \prod_{i=1}^k A_i$ be the set of tuples with $\gcd(a_1, \dots, a_k) \ge D$. By assumption, $|\Omega| \ge \delta \prod_{i=1}^k |A_i|$. Summing the number of tuples divisible by each $d \ge D$, we obtain the following trivial upper bound on the size of $\Omega$:
\[
\sum_{d \ge D} \prod_{i=1}^k  \frac{X_i}{d}  \le \left(\prod_{i=1}^k X_i\right) \sum_{d \ge D} \frac{1}{d^k} \ll_k \frac{\prod_{i=1}^k X_i}{D^{k-1}}.
\]
This implies  $\prod_{i=1}^k |A_i| \ll_k \delta^{-1} D^{-(k-1)} \prod_{i=1}^k X_i$. In contrast, Theorem \ref{thm:main} provides a stronger result for large $D$. Moreover, our bound is essentially sharp; the factors $\delta^{-\frac{k}{k-1}}$ and $D^{-k}$ are optimal, as discussed in Section \ref{sec:sharp}.

It is also worth noting that if $\delta$ is extremely small
(roughly, $\delta \ll D^{-(k-1)}$, up to $\varepsilon$-losses), then the right-hand side
in Theorem~\ref{thm:main} may exceed the trivial upper bound
$\prod_{i=1}^k |A_i|\le \prod_{i=1}^k X_i$, so the theorem is most informative when
$\delta$ is not too small.


We also give a second, more elementary approach to the GCD problem, based on a large
sieve-type inequality. This leads to the following complementary estimate.
\begin{theorem}\label{thm:k-dim-hybrid}
Assume the same hypotheses as in Theorem~\ref{thm:main}, with $k\ge 2$.
Then
\begin{equation}\label{eq:hybrid-conditional}
\prod_{i=1}^k |A_i|
\ll_{k,\varepsilon}
\delta^{-\frac{k}{k-1}}
\frac{\prod_{i=1}^k X_i}{D^{k-\varepsilon}}.
\end{equation}
\end{theorem}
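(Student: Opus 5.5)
We argue by a large-sieve/duality argument over the common divisor, with the $D^{\varepsilon}$ loss coming from divisor-function bounds. For each good tuple $(a_1,\dots,a_k)$ with $\gcd\ge D$, the gcd $g=\gcd(a_1,\dots,a_k)$ has a divisor $d\in[D,D^{1+\varepsilon'}]$? That is not automatic. Instead we dyadically decompose according to the value of $g$: $g\in[G,2G]$ with $G\ge D$. Pigeonholing over $O(\log X)$ dyadic ranges costs a logarithm, which must be avoided. We therefore weight range $G$ by $(D/G)^{\varepsilon}$: since $\sum_{G\ge D \text{ dyadic}}(D/G)^{\varepsilon}\ll_\varepsilon 1$, there is a $G\ge D$ with at least $c_\varepsilon\delta(D/G)^{\varepsilon}\prod|A_i|$ tuples having $g\in[G,2G]$. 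It then suffices to prove the bound for this $G$ with $\delta$ replaced by $\delta'=c_\varepsilon\delta(D/G)^\varepsilon$ and $D$ by $G$. The resulting factor $(G/D)^{\varepsilon k/(k-1)}G^{-k}\le D^{-k}$ absorbs the loss.

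Fix $G$ and write $\mathcal D=\{d\in[G,2G]\}$, with $m_i(d)=\#\{a\in A_i: d\mid a\}$. The number of good tuples is at most
\[
\sum_{d\in\mathcal D}\prod_{i=1}^k m_i(d).
\]
The plan is to bound this by Hölder:
\[
\sum_d\prod_i m_i(d)\le \prod_i\Bigl(\sum_d m_i(d)^{k}\Bigr)^{1/k}.
\]
This requires controlling $\sum_d m_i(d)^k$. We use $m_i(d)\le X_i/d+1\ll X_i/G$ and the divisor bound $\sum_d m_i(d)\le \sum_{a\in A_i}\tau(a)\ll_\varepsilon |A_i|X_i^{\varepsilon}$. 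These give
\[
\sum_d m_i(d)^k\le (X_i/G)^{k-1}\sum_d m_i(d).
\]
The divisor bound gives $X^\varepsilon$ rather than $D^\varepsilon$, so we refine it. We count only divisors $d\in[G,2G]$ of $a$, via the "large sieve" step: $\sum_{d\in[G,2G]}m_i(d)^2$ counts pairs $a,a'\in A_i$ with a common divisor in $[G,2G]$. This is controlled by the duality/large sieve inequality asserting that for $d\mid\gcd(a,a')$ the number of such $d$ per pair is $\tau(\gcd)$, with $\gcd\le 2X_i$. The main obstacle is exactly converting $X^\varepsilon$ losses into $D^\varepsilon$. To do this we exploit that $a/d\le 2X_i/G$ and write $a=dq$, counting pairs $(d,q)$. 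Then
\[
\sum_d m_i(d)=\#\{(d,q): dq\in A_i\},
\]
and a divisor of $a$ in $[G,2G]$ is determined by the complementary cofactor. The loss is thus $\tau$ restricted to a range, which we bound by $\ll (X_i/G)^{\varepsilon}$ on average, or by $G^\varepsilon$ via the symmetric choice.

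Combining, the number of good tuples is
\[
\ll_{\varepsilon}\prod_i\Bigl((X_i/G)^{k-1}|A_i|G^{\varepsilon}\Bigr)^{1/k}.
\]
Setting this $\ge\delta'\prod|A_i|$ gives
\[
\prod|A_i|^{(k-1)/k}\ll \delta'^{-1}G^{\varepsilon}\prod X_i^{(k-1)/k}G^{-(k-1)},
\]
that is,
\[
\prod|A_i|\ll\delta'^{-k/(k-1)}\frac{\prod X_i}{G^{k-\varepsilon'}}.
\]
Undoing the dyadic weighting yields \eqref{eq:hybrid-conditional}. The key difficulty I expect is the second step: proving a weighted divisor-count bound of the form $\sum_{d\in[G,2G]}m_i(d)\ll |A_i|G^{\varepsilon}$ uniformly. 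If this fails, I would instead apply a genuine large sieve inequality for the characters $e(a\,\cdot/d)$ to bound $\sum_d m_i(d)^2$ directly.
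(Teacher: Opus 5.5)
\textbf{There is a genuine gap.} Your skeleton matches the paper's: dyadic scale $G$, H\"older over $d\in[G,2G]$, and $m_i(d)\ll X_i/G$. The failure is in the step meant to convert the $X_i^{\varepsilon}$ loss into a $G^{\varepsilon}$ loss. The bound $\sum_{d\in[G,2G]}m_i(d)\ll_\varepsilon |A_i|G^{\varepsilon}$ is false. Take $P=\lcm(1,\dots,2G)\le X_i$, which is possible for $G$ up to about $\frac12\log X_i$, and let $A_i$ be the set of multiples of $P$ in $[X_i,2X_i]$. Then every integer $d\in[G,2G]$ divides every $a\in A_i$, so the sum equals $(G+1)|A_i|$.

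The cofactor/``symmetric choice'' remark cannot rescue this. The number of divisors of $a$ in $[G,2G]$ is only bounded by $\min(\tau(a),G+1)$, and this example attains $G+1$. So any argument that feeds only $\max_d m_i(d)\ll X_i/G$ and a first-moment bound into $\sum_d m_i(d)^k\le(\max_d m_i(d))^{k-1}\sum_d m_i(d)$ loses a full power of $G$ at such scales. It therefore cannot give $D^{-k+\varepsilon}$.

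\textbf{What is missing is a genuine second-moment bound.} This is Lemma~\ref{lem:standard_sieve}: $\sum_{d\sim G}m_i(d)^2\ll_\eta (X_iG^{\eta-1}+G)|A_i|$. The paper proves it elementarily by duality. One bounds $\sum_{n\sim X_i}|\sum_{d\sim G,\,d\mid n}\eta_d|^2$ with $n$ running over the whole interval rather than over $A_i$. Expanding the square, one counts multiples of $[d_1,d_2]$ and uses $\sum_{d_2\sim G}\gcd(d_1,d_2)/(d_1d_2)\ll\tau(d_1)/d_1$. Your additive-character fallback can also be made to work: expand $\mathbf{1}_{d\mid n}$ into the characters $e(bn/d)$ and apply the dual large sieve to Farey fractions of denominator at most $2G$. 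But it has to be carried out, and your proposal does not do this.

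\textbf{The case split.} With the second-moment bound, one uses $\sum_d m_i(d)^k\le (X_i/G)^{k-2}\sum_d m_i(d)^2$ when $G\le\sqrt{X_i}$. For larger $G$ the $G|A_i|$ term makes the second-moment bound too weak. There, however, your original crude route already suffices, because $\tau(a)\ll_\eta X_i^{\eta}\le G^{2\eta}$. Your proposal does not identify this per-coordinate split, and it is exactly how the paper obtains $S(\Delta)\ll\Delta^{-(k-1)+2\eta}\prod_i X_i^{(k-1)/k}|A_i|^{1/k}$.

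Since this per-scale bound decays geometrically in $\Delta\ge D$, one can simply sum over dyadic scales. Your weighted pigeonholing is therefore unnecessary, though harmless.
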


Comparing the simplified forms of the two bounds, one sees that
Theorem~\ref{thm:main} is stronger when $\delta$ is not too small
(roughly, when $\delta \gg D^{-1}$),
whereas Theorem~\ref{thm:k-dim-hybrid} may be preferable in the complementary range.

\subsection{Duality and Asymmetry: from GCD to LCM}
It is natural to consider the dual problem concerning the least common multiple (LCM). Intuitively, if a set of integers is arithmetically structured in the sense that many of its tuples have a ``small" LCM, the set itself should be constrained in size. 

Let $\Omega$ be the set of tuples in $\prod_{i=1}^k A_i$ with $\lcm(a_1, \dots, a_k) \le L$, whose size is assumed to be at least $\delta \prod_{i=1}^k |A_i|$. A first trivial estimate is easy to obtain. By relaxing the condition $\lcm(a_1, \dots, a_k)=l$ to the weaker condition $a_i|l$ for all $i$, we have
\begin{equation}\label{eq:LCM trivial}
    \delta \prod_{i=1}^k |A_i| \le |\Omega| \le \sum_{l \le L} \prod_{i=1}^k d_{A_i}(l) \le \sum_{l \le L} \tau(l)^k \ll L(\log L)^{2^k-1},
\end{equation}
where $d_{A_i}(l)$ counts the divisors of $l$ in $A_i$, and $\tau(l)$ is the standard divisor function. This yields the trivial bound  $\prod_{i=1}^k |A_i| \ll \delta^{-1}L(\log L)^{2^k-1}$ for the LCM problem.

When $k=2$, the LCM problem can be reduced to the GCD problem using the identity
\[
\lcm(a,b)=\frac{ab}{\gcd(a,b)}.
\]
Indeed, if $A_1\subset [X_1,2X_1]$ and $A_2\subset [X_2,2X_2]$, then
$\lcm(a_1,a_2)\le L$ implies $\gcd(a_1,a_2) \ge a_1a_2/L \ge X_1X_2/L$, for which the 2-dimensional GCD result is available. So we have the following corollary.

\begin{corollary}[2-Dimensional LCM]
  Let $X_1,  X_2, L \in [1, \infty)$ with $L \ge \max(X_1, X_2)$, and $\delta \in (0,1]$. Let $A_i \subset [X_i, 2X_i]$ ($i=1,2$) be integer sets such that $\lcm(a_1,a_2) \le L$ for at least $\delta|A_1||A_2|$ tuples $(a_1,a_2)$ in $A_1\times A_2$.
    Then for any $\varepsilon \in (0,1)$, we have
    \begin{equation*}
            |A_1||A_2| \ll_{\varepsilon} \delta^{-2-{\varepsilon}}   \frac{L^2}{X_1X_2}.
    \end{equation*}
\end{corollary}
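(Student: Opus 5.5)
The plan is to reduce directly to the two-dimensional GCD bound of Green and Walker \cite{GreenWalker} via the identity $\lcm(a_1,a_2)=a_1a_2/\gcd(a_1,a_2)$, as the paragraph before the statement indicates.

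Fix any pair $(a_1,a_2)\in A_1\times A_2$ with $\lcm(a_1,a_2)\le L$. Since $a_i\ge X_i$, we get
\[
\gcd(a_1,a_2)=\frac{a_1a_2}{\lcm(a_1,a_2)}\ge \frac{X_1X_2}{L}=:D .
\]
So at least $\delta|A_1||A_2|$ pairs satisfy $\gcd(a_1,a_2)\ge D$.

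Next we check the range hypothesis of the GCD theorem, namely $1\le D\le\min(X_1,X_2)$. The upper bound is immediate from $L\ge\max(X_1,X_2)$: this gives $D=X_1X_2/L\le X_1X_2/\max(X_1,X_2)=\min(X_1,X_2)$.

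The lower bound $D\ge 1$ can fail, and this is the only point needing care. I would handle it by cases.

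\emph{Case $D\ge 1$.} Apply the Green--Walker bound $|A_1||A_2|\ll_\varepsilon \delta^{-2-\varepsilon}X_1X_2/D^2$ (the $k=2$ case of Theorem~\ref{thm:main}'s statement). Substituting $D=X_1X_2/L$ gives
\[
|A_1||A_2|\ll_\varepsilon \delta^{-2-\varepsilon}\frac{X_1X_2\,L^2}{(X_1X_2)^2}=\delta^{-2-\varepsilon}\frac{L^2}{X_1X_2},
\]
which is exactly the claimed bound.

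\emph{Case $D<1$, i.e.\ $L>X_1X_2$.} Use the trivial bound $|A_i|\le X_i+1\le 2X_i$. This gives $|A_1||A_2|\le 4X_1X_2<4L^2/(X_1X_2)$, and since $\delta\le 1$ this is at most $4\delta^{-2-\varepsilon}L^2/(X_1X_2)$.

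Combining the two cases yields the corollary. I expect no real obstacle: the only things to verify are that the range conditions of the GCD result hold and that the degenerate range $D<1$ is absorbed by the trivial bound.
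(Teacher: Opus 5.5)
Your proposal is correct and follows the paper's argument exactly: the paper also uses $\lcm(a_1,a_2)=a_1a_2/\gcd(a_1,a_2)$ to get $\gcd(a_1,a_2)\ge X_1X_2/L$ and then invokes the two-dimensional Green--Walker bound. Your separate handling of the range $L>X_1X_2$ (where $D<1$) via the trivial bound $|A_1||A_2|\le 4X_1X_2$ is a detail the paper leaves implicit, and you treat it correctly.
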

However, this simple path is closed for dimensions $k \ge 3$. The relationship between the LCM and GCD becomes more complex, as it is governed by the Principle of Inclusion-Exclusion.
For instance, one always has
\[
\gcd(a_1,\cdots,a_k)=\frac{\prod_{i=1}^ka_i}{\lcm\left(\prod_{i\ne 1}a_i,\cdots,\prod_{i\ne k}a_i \right)}\ge \frac{\prod_{i=1}^kX_i}{\lcm(a_1,\cdots,a_k)^{k-1}}.
\]
By using Theorem \ref{thm:main}, together with the condition $\lcm(a_1,\cdots,a_k)\le L$ for at least $\delta$ proportion of tuples in $\prod_{i=1}^kA_i$, this yields only a very weak bound
\[
\prod_{i=1}^k|A_i|\ll \delta^{-\frac{k}{k-1}-\varepsilon} \frac{L^{k(k-1)}}{\big(\prod_{i=1}^kX_i\big)^{k-1}}.
\]
Comparing this with the trivial bound $\prod_{i=1}^k |A_i| \ll \delta^{-1} L (\log L)^{2^k-1}$ from \eqref{eq:LCM trivial}, we see that this result is superior only in the very narrow regime where $L$ remains close to the scale of $X_i$ (roughly when $L$ lies in the range $[X, X^{1+c_{k}}]$ for some small constant $c_k$, if $X_i \asymp X$ for all $i$).

Nevertheless, by adapting the minimal counterexample method used for the GCD problem, one can also show that
for $k\ge 3$,
\[
\prod_{i=1}^k|A_i|
\ll_{\varepsilon,k}
\delta^{-\frac{k}{k-1}-\varepsilon}
\frac{L^k}{\prod_{i=1}^k X_i},
\]
which may be viewed as a dual analogue of Theorem~\ref{thm:main}.

Our main result for the LCM problem is the following stronger estimate.
It is proved by a direct counting method,
which in this setting turns out to be both simpler and stronger.

\begin{theorem}\label{thm:lcm-dim-clean}
Let $k \ge 2$, $X_1,\dots,X_k,L\in [1,\infty)$ with
$L\ge \max(X_1,\dots,X_k)$, and let $\delta\in(0,1]$.
Let $A_i\subset [X_i,2X_i]$ $(1\le i\le k)$ be integer sets such that
$\lcm(a_1,\dots,a_k)\le L$ for at least
$\delta \prod_{i=1}^k |A_i|$ tuples $(a_1,\dots,a_k)$ in $\prod_{i=1}^k A_i$.
Then for every fixed $\varepsilon\in(0,1)$,
\[
\prod_{i=1}^k |A_i|
\ll_{k,\varepsilon}
\delta^{-\frac{k}{k-1}}
\frac{L^{\frac{k}{k-1}+\varepsilon}}
{\left(\prod_{i=1}^k X_i\right)^{\frac{1}{k-1}}}.
\]
\end{theorem}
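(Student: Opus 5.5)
The plan is to count the set $\Omega$ of good tuples directly, grouping tuples by their least common multiple and then applying Hölder's inequality across the $k$ coordinates. Write $P=\prod_{i=1}^k|A_i|$. The first step is to rewrite the target. Raising both sides to the power $k-1$, the claimed bound is equivalent to
\[
(\delta P)^k \ll_{k,\varepsilon} L^{k+\varepsilon'}\,\frac{P}{\prod_{i=1}^k X_i},
\]
with $\varepsilon'=(k-1)\varepsilon$. Since $\delta P\le|\Omega|$, it therefore suffices to prove
\[
|\Omega| \ll_{k,\varepsilon} L^{1+\varepsilon}\Bigl(\prod_{i=1}^k \frac{|A_i|}{X_i}\Bigr)^{1/k}.
\]
Here $\varepsilon$ is renamed, which is harmless because it is arbitrary.

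The second step is the relaxation already used in \eqref{eq:LCM trivial}. If $\lcm(a_1,\dots,a_k)=l\le L$, then every $a_i$ divides $l$. With $d_{A_i}(l)$ the number of divisors of $l$ lying in $A_i$, this gives
\[
|\Omega|\le \sum_{l\le L}\prod_{i=1}^k d_{A_i}(l).
\]
Instead of bounding each $d_{A_i}(l)$ by $\tau(l)$, as in the trivial bound, we apply Hölder's inequality with $k$ equal exponents:
\[
\sum_{l\le L}\prod_{i=1}^k d_{A_i}(l)\le\prod_{i=1}^k\Bigl(\sum_{l\le L} d_{A_i}(l)^k\Bigr)^{1/k}.
\]

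The third step estimates each factor. We spend $k-1$ powers of $d_{A_i}(l)$ on the divisor bound $d_{A_i}(l)\le\tau(l)\ll_{\varepsilon} L^{\varepsilon/k}$ and keep one power. For the remaining first moment, swap the order of summation: each $a\in A_i$ has at most $L/a\le L/X_i$ multiples up to $L$. This gives
\[
\sum_{l\le L} d_{A_i}(l)^k \ll_{k,\varepsilon} L^{\varepsilon}\sum_{l\le L} d_{A_i}(l) = L^{\varepsilon}\sum_{a\in A_i}\Bigl\lfloor \frac{L}{a}\Bigr\rfloor \le L^{1+\varepsilon}\frac{|A_i|}{X_i}.
\]
Taking the $k$-th root of each factor and multiplying over $i$ yields exactly the displayed bound on $|\Omega|$. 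The hypothesis $L\ge\max X_i$ guarantees that the multiple count $L/a$ is meaningful; otherwise $\Omega$ would be empty.

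There is no serious obstacle in this argument. The key idea is to use Hölder's inequality rather than the pointwise bound $\tau(l)^k$. This keeps one factor $d_{A_i}(l)$ linear, so summing over multiples produces the saving $|A_i|/X_i$. The only point needing care is the bookkeeping of the $\varepsilon$-exponents. The loss $L^{\varepsilon}$ appears inside the $k$-th power, and when we solve for $P$ it gets multiplied by $k/(k-1)$. This is absorbed by rescaling $\varepsilon$ at the start, which is allowed because $\varepsilon$ is arbitrary and the implied constant may depend on $k$ and $\varepsilon$.
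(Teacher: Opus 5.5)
Your proposal is correct and follows the paper's proof essentially step for step. You bound the count by $\sum_{l\le L}\prod_i d_{A_i}(l)$, apply Hölder with $k$ equal exponents, and in each $k$-th moment spend $k-1$ factors on the divisor bound while summing the remaining first moment over multiples to get $L|A_i|/X_i$; your rescaling of $\varepsilon$ plays the same role as the paper's choice $\eta=\varepsilon/k$, which yields the exponent $\tfrac{k}{k-1}+k\eta$.
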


    For the GCD problem, the  bounds \eqref{eq:hybrid-conditional} and \eqref{eq:main result}  obtained via the two different methods—Theorem \ref{thm:k-dim-hybrid} and Theorem \ref{thm:main}—are consistent (up to $\varepsilon$ losses in the exponents). 
In contrast, for the LCM problem,
the direct counting argument in Theorem~\ref{thm:lcm-dim-clean} gives a substantially
stronger bound. As detailed in Section \ref{sec:sharp}, this is uniformly optimal (up to $\varepsilon$ losses in the exponents). 

Even so, comparing Theorem \ref{thm:lcm-dim-clean} with Theorem \ref{thm:main}, it would be interesting to obtain the following result for the LCM problem, that is,  an $\varepsilon$ loss only in the exponent of $\delta$:
  \[
    \prod_{i=1}^k |A_i| \ll_{k,\varepsilon} \delta^{-\frac{k}{k-1}-\varepsilon} \frac{L^{\frac{k}{k-1}}}{\big(\prod_{i=1}^k X_i\big)^{\frac{1}{k-1}}}
    \]
    holds for all $k\ge 3$ and uniformly for all parameters $X_i, L$ and $\delta$.

\subsection{Notation.}
We use standard asymptotic notation throughout.
The relation $f \ll g$ (equivalently $f = O(g)$) means $|f| \leq C g$ for some constant $C > 0$.
We write $f \asymp g$ if both $f \ll g$ and $g \ll f$ hold.
If the implied constants depend on a parameter, we indicate this with a subscript. 
For brevity, we do not always explicitly indicate this dependence when it is clear from the context.

Denote  the $p$-adic valuation by $v_p(n)$, the highest power of $p$ dividing $n$ (extended to rationals by $v_p(a/b) = v_p(a) - v_p(b)$). 

For any real number $q \ge 1$, we denote by $\ell^q(\mathbb{Z})$ the space of sequences $x = (x_n)_{n \in \mathbb{Z}}$ for which the norm $\norm{x}_{\ell^q(\mathbb{Z})} := (\sum_{n \in \mathbb{Z}} |x_n|^q)^{1/q}$ is finite. A finitely supported probability measure $\mu$ on $\mathbb{Z}^k$ is understood as a non-negative function $\mu: \mathbb{Z}^k \to [0,1]$ such that $\sum_{\bm{t} \in \mathbb{Z}^k} \mu(\bm{t}) = 1$ and $\mu(\bm{t}) = 0$ for all but finitely many $\bm{t} \in \mathbb{Z}^k$.

\section{High-Dimensional GCD Problem}
In this section, we establish our main result, which generalizes the theorem of Green and Walker \cite{GreenWalker} to higher dimensions. Since there are some differences in the higher-dimensional cases, for technical reasons, we will restrict $k\ge3$. 

The proof of Theorem \ref{thm:main} essentially follows the structural induction method of Green and Walker; however, here we need to establish a high-dimensional measure concentration lemma.

\subsection{A High-Dimensional Measure Concentration Lemma}

The core of our structural argument relies on a new high-dimensional measure concentration lemma. We state and prove this lemma in this section; its general nature may be of independent interest.

Let $\bm t=(t_1,\dots,t_k)$ denote a vector in $\Z^k$. We define the ``GCD norm'' as $$\norm{\bm t}_{\mathrm{GCD}} := \sum_{i=1}^k t_i - k\min_{1\le i\le k}(t_i).$$

\begin{lemma}\label{lem:measure-concentration}
   Let $k\ge 3$. Let $q = q(k,\varepsilon) > \frac{k}{k-1}$ and $q' = \frac{q}{q-1}$.  Let $\lambda(k) \in (0,1)$ be a small constant depending on $k$, and assume $0 < \lambda \le \lambda(k)$. Let $c \le 1$.
   Suppose $\mu$ is a finitely supported probability measure on $\Z^k$, and there exist non-negative sequences $x^{(1)}, \dots, x^{(k)} \in \ell^{q'}(\Z)$ with $\norm{x^{(i)}}_{\ell^{q'}(\Z)} = 1$ such that for all $\bm t=(t_1, \dots, t_k) \in \Z^k$,
   \begin{equation} \label{eq:measure upper bound}
       \mu(\bm t) \le c \lambda^{\norm{\bm t}_{\mathrm{GCD}}} \prod_{i=1}^k x^{(i)}_{t_i}.
   \end{equation}
   Then we have

   \begin{enumerate}
       \item {Lower bound on $c$:}
       \begin{equation}\label{eq:lower bound of c}
       c \ge \frac{1}{k}\left(1-\lambda(k) \right)^{k-1}.
       \end{equation}

       \item Concentration:
       There exists an integer $m \in \mathbb{Z}$ such that $\mu$ is highly concentrated on the set $\mathcal{S}_m:= \{ m\bm{1} \} \cup \{ m\bm{1} + \bm{e}_j\mid 1\le j\le k\}$,
where $\bm {1} = (1, \dots, 1)$ and $\bm{e}_j$ denotes the $j$-th standard basis vector in $\mathbb{Z}^k$. Specifically, the measure outside this set satisfies
       \begin{equation}\label{eq:measure concentration}
            \sum_{\bm t \notin \mathcal{S}_m} \mu(\bm t) \ll_{k} \lambda^{\eta},
       \end{equation}
       where $\eta = \eta(k,q) := \min \{ k+q-2, \, k(q-1), \, 2q-1, \, q+1 \}$.
   \end{enumerate}
\end{lemma}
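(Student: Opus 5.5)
The plan is to reparametrize $\mathbb{Z}^k$ so that the GCD norm becomes a plain $\ell^1$ length. Then I combine the normalization $\sum_{\bm t}\mu(\bm t)=1$ with Hölder's inequality, using that $q>\frac{k}{k-1}$ is equivalent to $q'<k$. Write every $\bm t\in\Z^k$ uniquely as $\bm t=m\bm 1+\bm s$, where $m=\min_i t_i$ and $\bm s\in\Z_{\ge0}^k$ satisfies $\min_i s_i=0$. Then $\norm{\bm t}_{\mathrm{GCD}}=\sum_i s_i=:|\bm s|$. Summing \eqref{eq:measure upper bound} over all $\bm t$ gives
\[
1\le c\sum_{\bm s:\,\min s_i=0}\lambda^{|\bm s|}\,\Sigma(\bm s),\qquad \Sigma(\bm s):=\sum_{m\in\Z}\prod_{i=1}^k x^{(i)}_{m+s_i}.
\]
By the generalized Hölder inequality with all $k$ exponents equal to $k$, we get $\Sigma(\bm s)\le\prod_i\norm{x^{(i)}}_{\ell^k}$. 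Since $q'<k$, the inclusion $\ell^{q'}\subset\ell^k$ has norm at most one, so $\Sigma(\bm s)\le 1$. The number of $\bm s$ with $\min s_i=0$ and $|\bm s|=n$ is $\ll_k n^{k-2}$, so $1\le c\,(1+O_k(\lambda))$. For $\lambda\le\lambda(k)$ small this yields $c\ge (1+O_k(\lambda(k)))^{-1}$, which is stronger than \eqref{eq:lower bound of c}. That proves part (1).

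For part (2), I would exploit the slack in the inclusion $\ell^{q'}\subset\ell^k$. For a non-negative unit vector $x$ in $\ell^{q'}$ with largest entry $x_{m_i}$, we have $\norm{x}_{\ell^k}^k\le x_{m_i}^{k-q'}$, and moreover $\sum_{j\ne m_i}x_j^{q'}=1-x_{m_i}^{q'}=:\theta_i$. Separate the $\bm s=\bm 0$ term from the rest. The $|\bm s|=1$ terms contribute at most $k\lambda$, and the terms with $|\bm s|\ge2$ contribute $O_k(\lambda^2)$. Combined with $c\le1$, this gives
\[
1\le \Sigma(\bm 0)+k\lambda+O_k(\lambda^2),\qquad \Sigma(\bm 0)\le \prod_i\Big(\sum_m (x^{(i)}_m)^k\Big)^{1/k}.
\]
Since $\Sigma(\bm 0)\le 1-c_{k,q}\max_i\theta_i$, this forces every $\theta_i\ll\lambda$. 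Next I would show that all the peaks $m_i$ coincide at a common integer $m$. If two peaks differed, then each term of $\Sigma(\bm 0)$ would contain a factor lying off some peak, and Hölder would give $\Sigma(\bm 0)\le\theta^{1/q'}$-small, contradicting $\Sigma(\bm 0)\ge 1-O(\lambda)$. This identifies the set $\mathcal S_m$.

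It remains to bound $\mu$ on the complement of $\mathcal S_m$. I would split this complement into classes according to $|\bm s|$ and according to how many coordinates $t_i$ differ from the peak value. A coordinate equal to the peak uses $x^{(i)}_m\le1$. A coordinate off the peak contributes a factor whose $\ell^{q'}$ mass is at most $\theta_i^{1/q'}$. Applying Hölder to the off-peak coordinates then bounds each class by $\lambda^{|\bm s|}$ times a power of $\max_i \theta_i$. The main bootstrapping step is to improve the preliminary estimate $\theta\ll\lambda$. To do this, I would feed these bounds back into the inequality $1\le c\sum\lambda^{|\bm s|}\Sigma(\bm s)$ together with the sharp lower bound on $c$, and compare the loss in $\Sigma(\bm 0)$, which is linear in $\theta$ with a constant depending on $q$, against the possible gain, of order $\lambda\,\theta^{1/q'}$ or similar, from off-peak terms. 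Optimizing over the possible configurations should reproduce the four competing exponents in $\eta=\min\{k+q-2,\,k(q-1),\,2q-1,\,q+1\}$. The cases are: $k-1$ coordinates shifted by one, all coordinates off peak, one coordinate with $s_j=2$, and two coordinates with $s_j=1$.

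I expect this last step to be the main obstacle: the bookkeeping of how $\theta$ and $\lambda$ interact in each class, and checking that $k\ge3$ is exactly what makes every exponent exceed $1$. My first attempt would be to convert each class bound into the form $\lambda^{a}\theta^{b}$ and then substitute the self-consistent estimate $\theta\ll\lambda^{q}$, or whatever power the bootstrap yields.
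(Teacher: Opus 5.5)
Your proposal is correct and is essentially the paper's argument. Part (1) is the same H\"older-plus-geometric-series count, and your unique parametrization by the minimum even removes the paper's factor $k$. Part (2) runs on the same mechanism: the slack in $\ell^{q'}\subset\ell^k$ makes the diagonal lose $\gg_{k,q}\theta$, while every off-diagonal term gains only $O_k(\lambda\theta^{1/q'})$. This forces $\theta\ll\lambda^q$, after which each class of the complement is bounded by $\lambda^{|\bm s|}$ times a power of $\theta^{1/q'}$. The only structural difference is that the paper takes $m$ to be the maximizer of $\prod_i x^{(i)}_t$ directly, so it needs neither a separate peak-coincidence step nor the preliminary estimate $\theta\ll\lambda$.

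Two minor slips. First, the bootstrap uses $c\le 1$, not a lower bound on $c$. Second, the exponent $2q-1$ is not an optimized configuration. It is just the paper's crude bound $\lambda\gamma^{2/q'}$ for non-diagonal tuples with at least two off-peak coordinates, whose worst case is $t_0\bm 1+\bm e_j$ with $t_0\ne m$. Your candidate $\bm s=\bm e_i+\bm e_j$ costs only $\lambda^{2q}$. So what you need is that each class is $\ll\lambda^{\eta}$, not that the optimization reproduces $\eta$ exactly.
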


\begin{proof}
We begin with the fact that $\mu$ is a probability measure, its total mass is 1. Keep in mind that $0\le x_t^{(i)}\le 1$ for all $1\le i\le k$ and $t\in\Z$.
Applying the upper bound from the hypothesis \eqref{eq:measure upper bound}, we have
\begin{align*}
    1 = \sum_{\bm t \in \Z^k} \mu(\bm t) &\le c \sum_{t_1, \dots, t_k \in \Z} \lambda^{\sum_{i=1}^k |t_i - \min(t_1,\cdots,t_k)|} \prod_{i=1}^k x^{(i)}_{t_i}.
\end{align*}
Let $t_j = \min(t_1,\cdots,t_k)$ be the minimum value for a given tuple (if not unique, pick one such index $j$). We sum over all possible locations $j$  to cover all  tuples in $\Z^k$. Making the change of variables $t = t_j$ and $\ell_i = t_i - t \ge 0$ (so $\ell_j = 0$), we obtain$$1 \le c \sum_{j=1}^k \sum_{\ell_1, \dots, \ell_k \ge 0 \atop \ell_j = 0} \lambda^{\sum_{i \ne j} \ell_i} \left( \sum_{t \in \mathbb{Z}} \prod_{i=1}^k x^{(i)}_{t+\ell_i} \right).$$
By applying the generalized H\"older inequality to the inner sum over $t$, and by the translation invariance of $\ell^p$-norms, we have
$$\sum_{t \in \mathbb{Z}} \prod_{i=1}^k x^{(i)}_{t+\ell_i} \le \prod_{i=1}^k \|x^{(i)}\|_{\ell^k}.$$Since $q > \frac{k}{k-1}$ implies the conjugate exponent satisfies $q' < k$, the standard embedding $\ell^{q'} \subset \ell^k$ gives $\|x^{(i)}\|_{\ell^k} \le \|x^{(i)}\|_{\ell^{q'}}$. Given the hypothesis $\|x^{(i)}\|_{\ell^{q'}} = 1$, the inner sum is therefore bounded by $1$.

The remaining sum over $\ell_i$ factors into a product of $k-1$ geometric series
\[
\sum_{j=1}^k \sum_{\substack{\ell_i \in \Z_{\ge 0} \\ i \ne j}} \lambda^{\sum_{i \ne j} \ell_i} = \sum_{j=1}^k \prod_{i \ne j} \left( \sum_{\ell_i=0}^\infty \lambda^{\ell_i} \right) 
= k \left( \frac{1}{1-\lambda} \right)^{k-1}.
\]
Combining these results yields the desired lower bound for $c$. This completes the proof of the first part of the lemma.

Turning to the second part, let $\sup_{t\in\Z}\prod_{1\le i\le k}x^{(i)}_t=1-\gamma$ for some $\gamma\in [0,1]$, and suppose this supremum is attained at $t=m$. Then for each $i\in\{1,\dots,k\}$, we have $x^{(i)}_m\ge 1-\gamma$. Using the assumption $\|x^{(i)}\|_{\ell^{q'}} = 1$, we have
$$1=(x^{(i)}_m)^{q'}+\sum_{t\ne m}(x^{(i)}_t)^{q'},$$
which implies
\begin{equation}\label{eqn:pointwise bound}
    \sum_{t\ne m}(x^{(i)}_t)^{q'}\le q'\gamma\ll_k \gamma, \quad \text{and} \quad x^{(i)}_t\ll_k \gamma^{1/q'} \ \text{when} \ t\ne m.
\end{equation}

For any set $S\subset \Z^k$, define $\mu(S):=\sum_{\bm t \in S}\mu(\bm t)$. We partition the space $\Z^k$ into three disjoint subsets based on the distance $d_m(\bm t) := \sum_{i=1}^k |t_i - m|$. Let $S_0$ and $S_1$ be the sets where $d_m(\bm t)=0$ and $1$ respectively, and let $S_2$ denote the region where $d_m(\bm t) \ge 2$. We now bound the contribution $\mu(S_j)$ of each part for $j=0,1,2$.

For $j=0$, the center consists of the single point $S_0 = \{ (m, \dots, m) \}$.

For $j=1$, we consider the ``near center'' region $S_1$. Every tuple in $S_1$ has exactly one index $\ell \in \{1,\dots,k\}$ such that $t_{\ell}=m-1$ or $m+1$, and all other $t_i$ are equal to $m$. 
If $t_{\ell}=m-1$, we have $\norm{\bm t}_{\mathrm{GCD}}=(k-1)m+(m-1)-k(m-1)=k-1$.
Then by the trivial bound $x^{(i)}_m\le 1$ and the pointwise bound \eqref{eqn:pointwise bound} for $t_\ell\ne m$, we have
\begin{equation}\label{eq:S1-}
\begin{aligned}
      \mu (S_{1,-})&:=\sum_{\substack{(t_1,\cdots,t_k)\in S_1\\t_{\ell}=m-1}}\mu(t_1,\cdots,t_k)\le \lambda^{k-1}\sum_{(t_1,\cdots,t_k)\in S_1}x^{(\ell)}_{t_{\ell}}\prod_{i\ne \ell}x^{(i)}_{t_i}\\
      &\ll \lambda^{k-1} \gamma^{1/q'}\sum_{(t_1,\cdots,t_k)\in S_1}1 \ll_k\lambda^{k-1} \gamma^{1/q'}.
\end{aligned}
\end{equation}
 If $t_{\ell}=m+1$, we have $\norm{\bm t}_{\mathrm{GCD}}=1$.
In this case 
\begin{equation}\label{eq:S1+}
\begin{aligned}
      \mu (S_{1,+}):=\sum_{\substack{(t_1,\cdots,t_k)\in S_1\\t_\ell=m+1}}\mu(t_1,\cdots,t_k)\le \lambda\sum_{(t_1,\cdots,t_k)\in S_1}x^{(\ell)}_{t_{\ell}}\prod_{i\ne \ell}x^{(i)}_{t_i}\ll_k\lambda \gamma^{1/q'}.
\end{aligned}
\end{equation}
Combining \eqref{eq:S1-} and \eqref{eq:S1+} gives 
\begin{align}\label{eq:S1}
    \mu(S_1)=\mu(S_{1,-})+\mu(S_{1,+})\ll_k \lambda\gamma^{1/q'}.
\end{align}

For $j=2$, we partition the ``far-from-center'' set $S_{ 2}$ into two disjoint subsets:
\begin{itemize}
    \item $S_{2a}$: The set of tuples where at least two coordinates are different from $m$.
    \item $S_{2b}$: The set of tuples where exactly one coordinate is different from $m$, and its distance from $m$ is at least 2.
\end{itemize}
We further decompose $S_{2a}$ into diagonal tuples $S^{\text{diag}}_{2a}:=\{(t,\dots,t) \mid t\ne m \}$ and non-diagonal tuples $S^{\text{non-diag}}_{2a}:=S_{2a}\setminus S_{2a}^{\text{diag}}$. The total measure is
$$\mu(S_{\ge 2}) = \mu(S_{2a}^{\text{diag}}) + \mu(S_{2a}^{\text{non-diag}}) + \mu(S_{2b}).$$
We bound each part separately.

\noindent \textbf{Bounding the measure of $S_{2a}^{\text{diag}}$.}
Since $q'/k<1$. By \eqref{eqn:pointwise bound} and H\"older's inequality,
\begin{equation}\label{eq:S2a-diag}
\begin{aligned}
    \mu(S_{2a}^{\text{diag}})&\le\sum_{t\ne m}\prod_{i=1}^{k}x^{(i)}_t
    \le \prod_{i=1}^k \| x^{(i)}_{t\ne m} \|_{\ell^k} \le \prod_{i=1}^k \| x^{(i)}_{t\ne m} \|_{\ell^{q'}} \ll \gamma^{k/q'}.
\end{aligned}
\end{equation}
\textbf{Bounding the measure of $S_{2a}^{\text{non-diag}}$.}
A tuple $(t_1, \dots, t_k)$ in $S_{2a}^{\text{non-diag}}$ has at least two coordinates different from $m$, and not all coordinates are equal.  For each such tuple, let \(t_{\min}=\min_{1\le i\le k}(t_i)\) and \(t_{\max}=\max_{1\le i\le k}(t_i)\) so that $t_{\min} \ne t_{\max}$. Let $r, s$ be distinct indices such that $t_{r}, t_s \ne m$. By the pointwise bound in \eqref{eqn:pointwise bound}, we have 
\[
\prod_{i=1}^k x^{(i)}_{t_i}=\prod_{i=1}^k (x^{(i)}_{t_i})^{q'/k}\cdot(x^{(r)}_{t_r}x^{(s)}_{t_s})^{1-q'/k} \prod_{i\ne r,s} (x^{(i)}_{t_i})^{1-q'/k}  \ll_k  \gamma^{\frac{2}{q'}(1-\frac{q'}{k})}\prod_{i=1}^k (x^{(i)}_{t_i})^{q'/k}. 
\]
Then we bound the measure of $S_{2a}^{\text{non-diag}}$ by summing over all non-diagonal tuples that satisfy these two conditions, i.e.,
\begin{equation*}
\begin{aligned}
    \mu(S_{2a}^{\text{non-diag}}) 
    & \ll_k\gamma^{\frac{2}{q'}(1-\frac{q'}{k})} \sum_{\substack{(t_1, \dots, t_k) \in \Z^k \\ t_{\min} \ne t_{\max}\\ \exists r,s,\ \text{s.t.}\ t_r, t_s\ne m}} \lambda^{\sum_{i=1}^k |t_i - t_{\min}|} \prod_{i=1}^k (x^{(i)}_{t_i})^{q'/k}.
    \end{aligned}
\end{equation*}
Let $\Sigma$ denote the summation above. Following the method of the first part, we sum over all possible locations $j$ of the minimum value and perform a change of variables $\ell_i = t_i - t_j \ge 0$. 
The condition $t_{\min} \ne t_{\max}$ implies that not all $\ell_i$  are zero. The condition that at least two coordinates are not $m$ implies either the minimum $t_j \ne m$ and there is at least one other index $r \ne j$ such that $t_j + \ell_r \ne m$, or there are at least two indices $r, s \ne j$ such that $t_j + \ell_r \ne m$ and $t_j + \ell_s \ne m$. 
Thus, we can bound $\Sigma$ by splitting the sum over $t_j$:
\begin{align*}
    \Sigma &\le \sum_{j=1}^k \Bigg(\sum_{\substack{t_j \ne m }} \sum_{\substack{\ell_i \in \Z_{\ge 0} , i \ne j \\ \text{not all } \ell_i=0\\\exists r\ne j\ \text{s.t.}\ t_j+\ell_r\ne m}} \lambda^{\sum_{i \ne j} \ell_i} \prod_{i=1}^k \left(x^{(i)}_{t_j+\ell_i}\right)^{q'/k}\\
    &\qquad\qquad + \sum_{t_j \in \Z} \sum_{\substack{\ell_i \in \Z_{\ge 0} , i \ne j \\ \text{not all } \ell_i=0\\\exists r,s\ne j\ \text{s.t.}\ \ell_r,\ell_s\ne m-t_j}} \lambda^{\sum_{i \ne j} \ell_i} \prod_{i=1}^k \left(x^{(i)}_{t_j+\ell_i}\right)^{q'/k}\Bigg).
\end{align*}
Interchanging the summation and taking the supremum over the $\ell_i$ terms gives
\begin{align*}
    \Sigma \le  &\sup_{\ell_i\in\Z,i\ne j} \Bigg( \sum_{\substack{t_j \ne m\\\exists r\ne j, \text{s.t.} t_j+\ell_r\ne m}} \prod_{\substack{i=1\\}}^k \left(x^{(i)}_{t_j+\ell_i}\right)^{q'/k} + \sum_{\substack{t_j \in \Z\\\exists r,s\ne j, \text{s.t.} \ell_r,\ell_s\ne m-t_j}} \prod_{\substack{i=1}}^k \left(x^{(i)}_{t_j+\ell_i}\right)^{q'/k} \Bigg)\\
    & \times\sum_{j=1}^k \Bigg( \sum_{\substack{\ell_i \in \Z_{\ge 0},\ i \ne j \\ \text{not all } \ell_i=0}} \lambda^{\sum_{i \ne j} \ell_i} \Bigg).
\end{align*}
The geometric series term here is bounded by
\begin{align*}
   \Bigg( \sum_{\ell_i \in \Z_{\ge 0} , i \ne j} \prod_{i \ne j} \lambda^{\ell_i} \Bigg) - \lambda^0 \
    = \left( \prod_{i \ne j} \sum_{\ell_i=0}^\infty \lambda^{\ell_i} \right) - 1 
    = \left( \frac{1}{1-\lambda} \right)^{k-1} - 1\ll_k\lambda,
\end{align*} 
For the supremum term, since at least two terms in the product correspond to indices where the value is not $m$. By H\"older's inequality and \eqref{eqn:pointwise bound},  
we have
\begin{align*}
     &\sum_{\substack{t_j \ne m\\\exists r\ne j, \text{s.t.} t_j+\ell_r\ne m}} \prod_{\substack{i=1\\}}^k \left(x^{(i)}_{t_j+\ell_i}\right)^{q'/k} \le \\
     &\qquad\bigg(\sum_{t_j\ne m}(x^{(j)}_{t_j})^{q'}\Big)^{1/k}\bigg(\sum_{t_j+\ell_r\ne m}(x^{(r)}_{t_j+\ell_r})^{q'}\bigg)^{1/k} \prod_{i\ne j,r}\bigg(\sum_{t_i\in\Z}(x^{(i)}_{t_i})^{q'}\bigg)^{1/k}\ll \gamma^{\frac{2}{k}}.
\end{align*}
Similarly, in the second case ($\exists r,s\ne j, \text{s.t.} \ell_r,\ell_s\ne m-t_j$), the shifting guarantees that there are at leat two terms  avoid $m$, yielding the same bound $\ll_k \gamma^{2/k}$.
Combining these bounds,  we obtain
\begin{equation}\label{eq:S2a-nondiag}
    \mu(S_{2a}^{\text{non-diag}}) \ll_k   \gamma^{\frac{2}{q'}(1-\frac{q'}{k})}\lambda \gamma^{\frac{2}{k}}\ll_k \lambda \gamma^{{2}/{q'}}.
\end{equation}

\noindent \textbf{Bounding the measure of $S_{2b}$.}
For tuples in $S_{2b}$, exactly one coordinate $t_{\ell} \ne m$ with $|t_{\ell}-m| = s \ge 2$.
The product term is bounded by $\prod x^{(i)}_{t_i} \le x^{(\ell)}_{t_{\ell}} \le \gamma^{1/q'}$.
Note that $\norm{\bm t}_{\mathrm{GCD}}$ equals  $s$ 
if $t_{\ell}=m+s$, and equals $(k-1)s$ if $t_{\ell}=m-s$.  We bound crudely $\lambda^{\norm{\bm t}_{\mathrm{GCD}}}\le\lambda^{s}$. Thus,
\begin{equation}\label{eq:S2b}
\begin{aligned}
\mu(S_{2b}) &\le \gamma^{1/q'} \sum_{\ell=1}^k \sum_{s\ge 2} \lambda^s \sum_{|t_{\ell}-m|=s} 1 
\ll_k \lambda^2 \gamma^{1/q'},
\end{aligned}
\end{equation}
where  $\sum_{s\ge 2}\lambda^s = \frac{\lambda^2}{1-\lambda} \ll_k \lambda^2$ holds, since $\lambda$ is bounded away from 1 (specifically $\lambda \le \lambda(k) < 1$).

Finally, to reveal the relation between $\gamma$ and $\lambda$, we need to bound the measure of $S_0$ and $S_{2a}^{\text{diag}}$ together. In this set 
$\norm{\bm t}_{\mathrm{GCD}}=0$. Then by H\"older's inequality again,
\begin{equation}\label{eqn:S0+S2a-diag}
\begin{aligned}
   \mu(S_0 \cup S_{2a}^{\text{diag}}) &\le \sum_{t \in \Z} \prod_{i=1}^k x^{(i)}_t \le \sup_{t \in \Z} \left(\prod_{i=1}^k x^{(i)}_t\right)^{1-q'/k} \sum_{t \in \Z} \left(\prod_{i=1}^k x^{(i)}_t\right)^{q'/k}\\
   &= (1-\gamma)^{1-q'/k} \prod_{i=1}^k \left( \sum_{t \in \Z}(x^{(i)}_t)^{q'}\right)^{1/k}\\
   &\le 1 - (1-q'/k)\gamma.
\end{aligned}
\end{equation}
Putting  \eqref{eqn:S0+S2a-diag}, \eqref{eq:S1}, \eqref{eq:S2a-nondiag} and \eqref{eq:S2b} all together we have
\begin{align*}
1&=\mu(S_0\bigcup S_{2a}^{\text{diag}})+\mu(S_1)+ \mu(S_{2a}^{\text{non-diag}})+\mu(S_{2b})\\
& \le  1-(1-q'/k)\gamma +O_k\big(\lambda \gamma^{1/q'}+\lambda \gamma^{{2}/{q'}}+\lambda^2 \gamma^{1/q'}\big).
\end{align*}
This implies $\gamma \ll_k \lambda \gamma^{1/q'}$, so  $\gamma \ll_k \lambda^q$ (since $1-1/q' = 1/q$).
Now observe that
\(
\mathcal S_m = S_0 \cup S_{1,+}.
\)
Hence
\[
\mathbb Z^k\setminus \mathcal S_m
= S_{1,-}\cup S_{2a}^{\mathrm{diag}}\cup S_{2a}^{\mathrm{non\text{-}diag}}\cup S_{2b},
\]
so that
\begin{align*}
\sum_{\bm t\notin \mathcal S_m}\mu(\bm t)
&= \mu(S_{1,-})+\mu(S_{2a}^{\mathrm{diag}})
+\mu(S_{2a}^{\mathrm{non\text{-}diag}})+\mu(S_{2b}) \\
&\ll_k \lambda^{k-1}\gamma^{1/q'}
+ \gamma^{k/q'}
+ \lambda\gamma^{2/q'}
+ \lambda^2\gamma^{1/q'} \\
&\ll_k \lambda^\eta,
\end{align*}
where $\eta = \min(k+q-2, k(q-1), 2q-1, q+1)$.
This completes the proof of Lemma \ref{lem:measure-concentration}.
\end{proof}

\subsection{Structure of a Minimal Counterexample}

\begin{definition}[Minimal Counterexample]
A tuple $(A_1, \dots, A_k)$ is said to be a {\it minimal counterexample} to Theorem \ref{thm:main} if it satisfies the hypotheses of Theorem \ref{thm:main} but violates the bound \eqref{eq:main result}, and such that no counterexample exists whose set of prime divisors is contained in $\probP' \subsetneq \probP(\bigcup_{i=1}^k A_i)$.
\end{definition}

Our first goal is to demonstrate that such a minimal counterexample must possess a strong  arithmetic structure.

\begin{proposition}\label{prop:minimal-structure}
    Suppose $(A_1, \dots, A_k)$ is a minimal counterexample to Theorem \ref{thm:main}. Let $\Omega \subseteq \prod_{i=1}^k A_i$ be the set of integer tuples $(a_1, \dots, a_k)$ such that $\gcd(a_1, \dots, a_k) \ge D$, and $|\Omega| \ge \delta \prod_{i=1}^k |A_i|$. Then there exists an integer $N$ and a subset $\Omega' \subset \Omega$ with $|\Omega'| \geq \frac{1}{2}|\Omega|$ such that for all primes $p$ and all tuples $(a_1, \dots, a_k) \in \Omega'$, 
   the $p$-adic valuations $v_p(a_i)$ are equal to $v_p(N)$ for all $i=1, \dots, k$, with at most one exception which equals $v_p(N)+1$.
\end{proposition}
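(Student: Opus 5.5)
We follow the Green--Walker strategy, one prime at a time, with Lemma~\ref{lem:measure-concentration} replacing their one-dimensional concentration estimate. Fix a prime $p\in\probP(\bigcup_i A_i)$. For $\bm t=(t_1,\dots,t_k)\in\Z_{\ge0}^k$ set
\[
A_i^{(t_i)}=\{a\in A_i: v_p(a)=t_i\},\qquad \alpha_i(t_i)=|A_i^{(t_i)}|/|A_i|,
\]
and let $\Omega_{\bm t}=\Omega\cap\prod_i A_i^{(t_i)}$ and $\mu(\bm t)=|\Omega_{\bm t}|/|\Omega|$. Then $\mu$ is a finitely supported probability measure on $\Z^k$.

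The key step is to show that $\mu$ satisfies the hypothesis \eqref{eq:measure upper bound} with $\lambda$ essentially $p^{-1}$ (or a power of $p^{-1}$), because the sets from which $p$ has been removed form a configuration with fewer primes. Define $A_i'=\{a/p^{t_i}: a\in A_i^{(t_i)}\}\subset[X_i/p^{t_i},2X_i/p^{t_i}]$. These sets have prime set excluding $p$, so by minimality they satisfy \eqref{eq:main result}. For $(a_i)\in\Omega_{\bm t}$ we have $\gcd(a_i/p^{t_i})\ge D/p^{\min_i t_i}$. The density of good tuples is $\delta_{\bm t}=|\Omega_{\bm t}|/\prod_i|A_i^{(t_i)}|$. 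If $p\le p_0$ we also gain one factor of $C_k$ because $\#\probP_{\mathrm{small}}$ drops.

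Applying \eqref{eq:main result} to $(A_i')$ and comparing with the failure of \eqref{eq:main result} for $(A_i)$ gives, with $\kappa=\frac{k+\varepsilon/(k-1)}{k-1}$ and $q=\kappa$ (so $q>\frac{k}{k-1}$ and $1/q'=1-1/\kappa$),
\[
\mu(\bm t)=\frac{\delta_{\bm t}\prod_i\alpha_i(t_i)}{\delta}\le c\,p^{-(\sum_i t_i-k\min_i t_i)/\kappa}\prod_i\alpha_i(t_i)^{1-1/\kappa}.
\]
Here $c<1$ (and $c<C_k^{-1}$ when $p$ is small). Setting $x^{(i)}_t=\alpha_i(t)^{1/q'}$ gives $\|x^{(i)}\|_{\ell^{q'}}=1$ and $\lambda=p^{-1/\kappa}$. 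For $p>p_0$, $\lambda\le\lambda(k)$ holds automatically. For $p\le p_0$ the lemma's requirement $\lambda\le\lambda(k)$ may fail, but then the gained factor $C_k^{-1}=2^{-4k}$ makes $c$ smaller than $\frac1k(1-\lambda(k))^{k-1}$. I would handle this case by a direct version of part (1) of Lemma~\ref{lem:measure-concentration}: the same geometric-sum computation with $\lambda\le 2^{-1/\kappa}$ gives total mass at most $c\,k(1-2^{-1/\kappa})^{-(k-1)}<1$, which is a contradiction. So small primes $p\mid\prod a_i$ cannot occur at all, or are forced into the concentrated configuration. Checking that $2^{4k}$ is large enough here is the delicate numerical point.

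For large $p$, part (2) of Lemma~\ref{lem:measure-concentration} gives an integer $m_p$ with $\mu(\Z^k\setminus\mathcal S_{m_p})\ll_k p^{-\eta/\kappa}$. Since $\eta>1$ for $q$ near $k/(k-1)$ and $k\ge3$ (e.g.\ $2q-1>1$ and $k+q-2>1$), we would check that $\eta/\kappa>1$, so that $\sum_{p>p_0}p^{-\eta/\kappa}$ converges and its tail beyond $p_0$ is at most $1/4$ once $p_0$ is large. For primes not dividing any element, take $m_p=0$ trivially. Put $N=\prod_p p^{m_p}$ (finitely many $p$ have $m_p\ne0$) and let $\Omega'$ be the set of tuples lying in $\mathcal S_{m_p}$ for every $p$. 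Summing the exceptional masses over $p$ gives $|\Omega\setminus\Omega'|\le\frac12|\Omega|$. Membership in $\mathcal S_{m_p}$ for every $p$ is exactly the stated valuation condition: all $v_p(a_i)=v_p(N)$, with at most one equal to $v_p(N)+1$.

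I expect the main obstacle to be the bookkeeping in the comparison step: checking that the exponent of $\delta$ matches the exponent $1/q'$ needed on $\alpha_i$, that the $D$- and $X_i$-factors combine to give precisely $p^{-\|\bm t\|_{\mathrm{GCD}}}$ raised to the correct power, and handling the edge cases where $D/p^{\min t}<1$ or $\Omega_{\bm t}=\emptyset$. These edge cases are trivial, since then $\mu(\bm t)=0$ or the trivial bound suffices.
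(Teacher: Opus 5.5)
Your proposal is correct and follows the paper's proof essentially step for step: localize at $p$, compare the $p$-removed configuration with the counterexample to get $\mu_p(\bm t)\le c\,p^{-\norm{\bm t}_{\mathrm{GCD}}/q}\prod_i\alpha_{i,t_i}^{1/q'}$, rule out every $p\le p_0$ with the geometric-sum bound from part (1) of Lemma~\ref{lem:measure-concentration}, and union-bound the part (2) tails over $p>p_0$. Two small corrections are needed: for $p\le p_0$, taking $q$-th roots gives $c=C_k^{-1/q}$ rather than $C_k^{-1}$ (the paper bounds this by $2^{-2k}$, valid since $q\le 2$ for $k\ge 3$, and that is still below $\frac1k(1-2^{-(k-1)/(k+1)})^{k-1}$); and summability of $\sum_{p>p_0}p^{-\eta/q}$ requires $\eta>q$, not $\eta>1$, which holds because the binding term of the minimum satisfies $k(q-1)>q$ exactly when $q>k/(k-1)$.
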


\begin{proof}
The proof proceeds in three steps.

\textbf{1. Localization at a Prime.}
Fix an arbitrary prime $p \in \probP(\bigcup_{i=1}^k A_i)$. For each set $A_i$, we partition it according to the $p$-adic valuation. Define
$$ A_{i,t} = \{a \in A_i \mid v_p(a)=t\}, \quad \alpha_{i,t} = \frac{|A_{i,t}|}{|A_i|}. $$
Let
$$ \mu_p(t_1, \dots, t_k) = \frac{|\Omega \cap (A_{1,t_1} \times \dots \times A_{k,t_k})|}{|\Omega|}. $$
This is a probability measure on $\Z_{\ge 0}^k$.

For any tuple $(t_1, \dots, t_k)$ for which the set $\Omega \cap (\prod_i A_{i,t_i})$ is non-empty, we define the ``prime-removed'' sets 
$$\bar{A}_{i,t} = p^{-t}A_{i,t}=\{p^{-t}a|a\in A_{i,t} \}.$$
An element $x \in \bar{A}_{i,t}$ lies in the interval $[X_i/p^t, 2X_i/p^t]$.
Observe that the map $a_i \mapsto x_i=p^{-t_i}a_i$ induces a bijection between $\prod_{i=1}^k A_{i,t_i}$ and $\prod_{i=1}^k \bar{A}_{i,t_i}$. Additionally, if $(a_1, \dots, a_k) \in \Omega$, then the corresponding tuple $(x_1, \dots, x_k)$ satisfies
$$\gcd(x_1, \dots, x_k)=\gcd(p^{-t_1}a_1, \dots, p^{-t_k}a_k) =\frac{\gcd(a_1, \dots, a_k)}{p^{\min(t_1, \dots, t_k)}} \ge \frac{D}{p^{\min(t_1, \dots, t_k)}}. $$
Then by using $|\bar{A}_{i,t_i}| = |A_{i,t_i}| = \alpha_{i,t_i}|A_i|$, we get the proportion of elements $(x_1, \dots, x_k)$ in $\prod_{i=1}^k \bar{A}_{i,t_i}$ such that $\gcd(x_1, \dots, x_k)\ge p^{-\min(t_1, \dots, t_k)} D$ is
\begin{align*}
\frac{|\Omega \cap (A_{1,t_1} \times \dots \times A_{k,t_k})|}{\prod_{i=1}^k|\bar{A}_{i,t_i}|}&=\frac{|\Omega \cap (A_{1,t_1} \times \dots \times A_{k,t_k})|}{|\Omega|}\cdot\frac{\prod_{i=1}^k|{A}_{i}|}{\prod_{i=1}^k|{A}_{i,t_i}|}\cdot\frac{|\Omega|}{\prod_{i=1}^k|{A}_{i}|}\\
&\ge \frac{ \mu_p(t_1, \dots, t_k)}{\prod_{i=1}^k \alpha_{i,t_i}}\delta.
\end{align*}
The set of primes dividing $\cup \bar{A}_{i,t_i}$ is a proper subset of $\probP(\cup A_i)$, as $p$ has been removed. By the minimality assumption, the tuple $(\bar{A}_{1,t_1}, \dots, \bar{A}_{k,t_k})$ is not a counterexample. It must therefore satisfy the bound of Theorem \ref{thm:main}, which gives
\begin{equation}\label{eq:not counterexample}
\prod_{i=1}^k |\bar{A}_{i,t_i}|=\prod_{i=1}^k\alpha_{i,t_i}|A_i| \le C_k^{1+\#\probP_{\text{small}}(\cup\bar{A}_{i,t_i})} \left( \frac{ \mu_p(t_1, \dots, t_k)}{\prod_{i=1}^k \alpha_{i,t_i}}\delta\right)^{-\frac{k+\frac{\varepsilon}{k-1}}{k-1}} \frac{\prod (X_i/p^{t_i})}{(D/p^{\min(t_i)})^k}. 
\end{equation}
On the other hand, since  $(A_1, \dots, A_k)$ is a counterexample, we have
\begin{align}\label{eq:counterexample}
    \prod_{i=1}^k |A_i| > C_k^{1 + \#\probP_{\text{small}}\left(\bigcup A_i\right)} \cdot \delta^{-\frac{k+\frac{\varepsilon}{k-1}}{k-1}} \cdot \frac{\prod_{i=1}^k X_i}{D^k}
\end{align}
Note $\probP\left(\bigcup \bar{A}_{i,t_i}\right) \subset \left(\bigcup A_i\right) \setminus \{p\}$, so $\#\probP_{\text{small}}\left(\bigcup \bar{A}_{i,t_i}\right) \leq \#\probP_{\text{small}}\left(\bigcup A_i\right) - \textbf{1}_{p \leq p_0}$.
Comparing \eqref{eq:not counterexample} and \eqref{eq:counterexample} gives  the key measure constraint
\begin{equation}
\begin{aligned}\label{eq:measure_constraint}
    \mu_p(t_1, \dots, t_k) &\le \left(C_k \right)^{-\textbf{1}_{p \le p_0}/q}   \left(p^{-1/q}\right)^{\sum_{i=1}^k t_i-k\min_{1\le i\le k}(t_i)}\prod_{i=1}^k\left(\alpha_{i,t_i}\right)^{(q-1)/q}  \\
    &\le 2^{-2k\cdot \textbf{1}_{p \le p_0}}   \left(p^{-1/q}\right)^{\norm{\bm t}_{\mathrm{GCD}}}\prod_{i=1}^k\left(\alpha_{i,t_i}\right)^{1/q'}, 
\end{aligned}
\end{equation}
since $C_k=2^{4k}$, $q=\frac{k+\frac{\varepsilon}{k-1}}{k-1}\le 2$ (for $k\ge 3$) and $q' = \frac{q}{q-1}$.

\textbf{2. Applying the Measure Concentration Lemma.}
The inequality \eqref{eq:measure_constraint} fits the hypothesis of the high-dimensional measure concentration lemma (Lemma \ref{lem:measure-concentration}). With 
$$ c =2^{-2k\cdot \textbf{1}_{p \le p_0}},\ \lambda = p^{-1/q},\ \lambda(k)= 2^{-\frac{k-1}{k+1}}, \ \text{and}\ x^{(i)}_{t_i} = (\alpha_{i,t_i})^{1/q'},$$ 
the first part of Lemma \ref{lem:measure-concentration}  implies that
\[
c =2^{-2k\cdot \textbf{1}_{p \le p_0}} \ge \frac{1}{k}\left(1-2^{-\frac{k-1}{k+1}}\right)^{k-1},
\]
which forces every prime $p$ in $\probP(\bigcup_{i=1}^k A_i)$ to satisfy $p>p_0$.
Now, for each such prime $p > p_0$, the second part of Lemma \ref{lem:measure-concentration} implies that there exists an integer $m_p$ such that the measure $\mu_p$ is highly concentrated on the set $\mathcal{S}_{m_p} = \{ m_p\bm{1} \} \cup \{ m_p\bm{1} + \bm{e}_j\mid 1\le j\le k\}$. Specifically, the measure outside $\mathcal{S}_{m_p}$ satisfies
\begin{align}\label{eq:m_p constriant}
    \sum_{\bm t \notin \mathcal{S}_{m_p}} \mu_p(\bm t) \ll_{k} \lambda^{\eta} = p^{-\eta/q} = p^{-k(q-1)/q} \le p^{-1-\frac{\varepsilon}{k+1}},
\end{align}
where in our context $q=\frac{k+{\varepsilon}/{(k-1)}}{k-1}$, and $\eta=\eta(k,q)=\min\Big(k+q-2,k(q-1),2q-1,q+1\Big)=k(q-1)$.

\textbf{3. Combining all prime information together.}
We define an integer $N = \prod_{p } p^{m_p'}$, where $m_p'=m_p$ if $p\in\probP(\bigcup_{i=1}^k A_i)$ and $m_p'=0$ otherwise. A tuple $(a_1, \dots, a_k) \in \Omega$ is called ``bad'' if there exists some prime $p>p_0$ such that
\[
(v_p(a_1),\dots,v_p(a_k))\notin \mathcal S_{m_p}.
\]

Let $\Omega_{\text{bad}}$ be the set of all bad tuples in $\Omega$. By \eqref{eq:m_p constriant}, 
\begin{align*}
\frac{|\Omega_{\text{bad}}|}{|\Omega|} &\le \frac{1}{|\Omega|}\sum_{p>p_0}\sum_{(a_1,\cdots,a_k)\in \Omega}\mathbf{1}_{{(v_p(a_1), \dots, v_p(a_k))\notin \mathcal{S}_{m_p}}}\\
&\le \sum_{p >p_0} {\sum_{\bm t \notin \mathcal{S}_{m_p}} \mu_p(\bm t)}
\ll_{k} \sum_{p >p_0} p^{-1-\frac{\varepsilon}{k+1}}\le \frac{1}{2},
\end{align*}
provided that $p_0=p_0(k,\varepsilon)$ is chosen sufficiently large.

Let $\Omega' = \Omega \setminus \Omega_{\text{bad}}$, so $|\Omega'| \ge \frac{1}{2}|\Omega|$. For any $(a_1, \dots, a_k) \in \Omega'$ and any prime $p\in \probP\!\left(\bigcup_{i=1}^k A_i\right)$, the valuation vector $\bm v_p = (v_p(a_1), \dots, v_p(a_k))$ must lie in $\mathcal{S}_{m_p}$. By the definition of $\mathcal{S}_{m_p}$, the valuations $v_p(a_i)$ are all equal to $m_p$, with at most one exception which equals $m_p+1$.
If $p\notin \probP\!\left(\bigcup_{i=1}^k A_i\right)
$, we clearly have $v_p(a_i)=0=m_p'=v_p(N)$ for all $i$. This completes the proof of Proposition \ref{prop:minimal-structure}.
\end{proof}

\subsection{Proof of Theorem \ref{thm:main}: The Combinatorial Argument and Final Contradiction}

We now arrive at the final step of the proof of Theorem \ref{thm:main}. We will show that any  sets exhibiting the structure derived in Proposition \ref{prop:minimal-structure} cannot, in fact, be a counterexample.

Proposition \ref{prop:minimal-structure} asserts the existence of an integer $N$ and a large subset $\Omega' \subset \Omega$ such that for any tuple $(a_1, \dots, a_k) \in \Omega'$, the quantities $a_i' := a_i/N$ are positive integers satisfying $\sum_{i=1}^k v_p(a_i') \le 1$ for all primes $p$. This immediately implies two simple but useful properties.

\begin{lemma}\label{lem:purity}
For every tuple $(a_1,\dots,a_k)\in \Omega'$, the integers $a_1',\dots,a_k'$ are square-free and pairwise coprime.
\end{lemma}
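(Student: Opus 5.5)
The plan is to read both properties off the valuation constraint $\sum_{i=1}^k v_p(a_i')\le 1$ for every prime $p$, which holds for each $(a_1,\dots,a_k)\in\Omega'$. This constraint comes from Proposition~\ref{prop:minimal-structure}.

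First I check that each $a_i'$ is a positive integer and record its valuations. Fix a tuple in $\Omega'$ and a prime $p$. By Proposition~\ref{prop:minimal-structure}, $v_p(a_i)\ge v_p(N)$ for all $i$. More precisely, $v_p(a_i)=v_p(N)$ for all indices except possibly one index $j$, and there $v_p(a_j)=v_p(N)+1$. Hence $v_p(a_i')=v_p(a_i)-v_p(N)\in\{0,1\}$ for every $i$, and at most one index has value $1$. This holds for every prime $p$, including those not dividing any element of $\bigcup_i A_i$, where all valuations are $0$. So $N\mid a_i$, each $a_i'$ is a positive integer, and $\sum_i v_p(a_i')\le 1$.

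Square-freeness then follows directly, since $v_p(a_i')\le 1$ for every prime $p$. Pairwise coprimality also follows: if a prime $p$ divided both $a_r'$ and $a_s'$ with $r\ne s$, then $\sum_i v_p(a_i')\ge 2$, a contradiction.

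There is no real obstacle here. The only point needing care is the bookkeeping: the proposition is phrased "for all primes $p$", so it also covers primes $p\le p_0$. These primes are absent from $\probP(\bigcup_i A_i)$ by the first part of Lemma~\ref{lem:measure-concentration}, so they contribute nothing, consistent with the convention $m_p'=0$ in the definition of $N$.
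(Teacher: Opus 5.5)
Your proof is correct and matches the paper's: both read square-freeness and pairwise coprimality directly off the constraint $\sum_{i=1}^k v_p(a_i')\le 1$ supplied by Proposition~\ref{prop:minimal-structure}. Your extra check that $N\mid a_i$ and $v_p(a_i')\in\{0,1\}$ simply spells out what the paper asserts in the paragraph before the lemma.
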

\begin{proof}
These properties follow directly from the $p$-adic constraint. For any prime $p$, if $p^2 | a_j'$ for some $j$, then $v_p(a_j') \ge 2$, which violates $\sum v_p(a_i') \le 1$.

    Suppose $p | a_i'$ and $p | a_j'$ for some $i \ne j$. This means $v_p(a_i') \ge 1$ and $v_p(a_j') \ge 1$. Then
    \( \sum_{l=1}^k v_p(a_l') \ge v_p(a_i') + v_p(a_j') \ge  2. \)
    This again violates the constraint that the sum cannot exceed 1.
\end{proof}

Now by Lemma \ref{lem:purity}, for any tuple $(a_1, \dots, a_k) \in \Omega'$, its greatest common divisor satisfies
\[
\gcd(a_1, \dots, a_k) = \gcd(Na_1', \dots, Na_k') = N\ge D.
\]
Clearly the map $(a_1, \dots, a_k) \mapsto  (a_1', \dots, a_k')$
is injective, and $a_i' \in [X_i/N, 2X_i/N]$ for each $i=1, \dots, k$.
So the size of $\Omega'$ is bounded by
\begin{align*}
|\Omega'| &\le |\{ (a_1', \dots, a_k') \in \mathbb{Z}^k \mid \forall i, X_i/N \le a_i' \le 2X_i/N \}| 
\le 2^k\frac{X_1 \cdots X_k}{D^k}.
\end{align*}
On the other hand, by Proposition \ref{prop:minimal-structure}, the lower bound is $|\Omega'| \ge \frac{1}{2}|\Omega| \ge \frac{\delta}{2}\prod_{i=1}^k |A_i|$.
Combining the two bounds on $|\Omega'|$ gives
\begin{equation}\label{eq:up bd}
\prod_{i=1}^k|A_i| \le \delta^{-1} 2^{k+1}\frac{X_1 \cdots X_k}{ D^k}.
\end{equation}
However, the definition of a minimal counterexample requires that the sets violate the bound stated in Theorem \ref{thm:main}, which together with \eqref{eq:up bd} implies
\[
 C_k \delta^{-\frac{k+\varepsilon/(k-1)}{k-1}} <\delta^{-1} 2^{k+1}.
\]
Recalling $C_k=2^{4k}>2^{k+1}$. This yields a contradiction.

Therefore, no minimal counterexample can exist, and Theorem \ref{thm:main} must hold true.

\subsection{Application: The Pairwise GCD Problem} \label{subsec:swise-gcd}
We conclude this section by giving an application of our main result Theorem \ref{thm:main} to the pairwise GCD problem.  Actually we turn more generally to the {$s$-wise GCD problem}, where every sub-tuple of size $s$ is required to have a large GCD.
This is done via a projection argument.

\begin{theorem}[Bound for the $s$-wise GCD Problem]\label{thm:pairwise_bound}
    Let $k \geq s \ge 2$. Let $X_1, \dots, X_k, D \in [1, \infty)$ with $D \leq \min(X_1, \dots, X_k)$, and let $\delta \in (0,1]$. Let $A_i \subset [X_i, 2X_i]$ for $1 \leq i \leq k$ be sets of integers. Suppose $\Omega \subset \prod_{i=1}^k A_i$ is a set of tuples with $|\Omega| \geq \delta \prod_{i=1}^k |A_i|$ such that for all $(a_1, \dots, a_k) \in \Omega$ and for every subset of indices $I = \{i_1, \dots, i_s\} \subset \{1, \dots, k\}$ of size $s$, we have
    \[
        \gcd(a_{i_1}, \dots, a_{i_s}) \ge D.
    \]
    Then for any $\varepsilon > 0$, we have
    \[
        \prod_{i=1}^k |A_i| \ll_{k,\varepsilon} \delta^{-\frac{k}{s-1}-\frac{k\varepsilon}{s}} \frac{\prod_{i=1}^k X_i}{D^k}.
    \]
\end{theorem}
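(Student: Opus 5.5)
The plan is a projection argument: reduce the $s$-wise problem to the $s$-dimensional GCD problem on each $s$-subset of coordinates, then multiply the resulting bounds over all $s$-subsets. Fix a subset $I=\{i_1,\dots,i_s\}\subset\{1,\dots,k\}$ of size $s$. Let $\pi_I:\prod_{i=1}^k A_i\to\prod_{i\in I}A_i$ be the coordinate projection, and put $\Omega_I:=\pi_I(\Omega)$.

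First I will show that $\Omega_I$ satisfies the hypotheses of the $s$-dimensional GCD problem with the same $\delta$ and $D$. Each fibre of $\pi_I$ restricted to $\Omega$ has size at most $\prod_{i\notin I}|A_i|$. Hence
\[
\delta\prod_{i=1}^k|A_i|\le|\Omega|\le|\Omega_I|\prod_{i\notin I}|A_i|,
\]
that is, $|\Omega_I|\ge\delta\prod_{i\in I}|A_i|$. By hypothesis every element of $\Omega_I$ has $\gcd(a_{i_1},\dots,a_{i_s})\ge D$, and $D\le\min_{i\in I}X_i$. We then invoke the simplified form of Theorem~\ref{thm:main} with $k$ replaced by $s$ when $s\ge3$, and the Green--Walker bound when $s=2$. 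With $\varepsilon$ as in the statement, this gives
\[
\prod_{i\in I}|A_i|\ll_{s,\varepsilon}\delta^{-\frac{s}{s-1}-\varepsilon}\frac{\prod_{i\in I}X_i}{D^{s}}.
\]

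Next I will multiply these $\binom{k}{s}$ inequalities over all $s$-subsets $I$. Each index $i$ lies in exactly $\binom{k-1}{s-1}$ of the subsets, so the left side becomes $\bigl(\prod_{i=1}^k|A_i|\bigr)^{\binom{k-1}{s-1}}$. On the right, the factor $\prod_i X_i$ also appears to the power $\binom{k-1}{s-1}$. The power of $D$ is $D^{-s\binom{k}{s}}=D^{-k\binom{k-1}{s-1}}$, using $s\binom{k}{s}=k\binom{k-1}{s-1}$. The power of $\delta$ is $\delta^{-\binom{k}{s}(\frac{s}{s-1}+\varepsilon)}$.

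Finally I take the $\binom{k-1}{s-1}$-th root, noting that $\binom{k}{s}/\binom{k-1}{s-1}=k/s$. This gives exactly
\[
\prod_{i=1}^k|A_i|\ll_{k,\varepsilon}\delta^{-\frac{k}{s}\left(\frac{s}{s-1}+\varepsilon\right)}\frac{\prod_i X_i}{D^k}=\delta^{-\frac{k}{s-1}-\frac{k\varepsilon}{s}}\frac{\prod_i X_i}{D^k},
\]
with the implied constant depending only on $k$ and $\varepsilon$.

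I do not expect a real obstacle; the step needing the most care is the exponent bookkeeping in the product. One point to check is that the lower bound $|\Omega_I|\ge\delta\prod_{i\in I}|A_i|$ does not lose any factor. Another is that the case $s=2$ genuinely requires the Green--Walker theorem, since Theorem~\ref{thm:main} is stated only for dimension at least $3$; this is consistent with the remark following Theorem~\ref{thm:main}.
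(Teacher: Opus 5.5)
Your proposal is correct and is essentially the paper's proof: project $\Omega$ onto each $s$-subset $I$, use the fibre bound to get $|\Omega_I|\ge\delta\prod_{i\in I}|A_i|$, apply the $s$-dimensional GCD bound, then multiply over all $\binom{k}{s}$ subsets and take the $\binom{k-1}{s-1}$-th root. You invoke Green--Walker explicitly for $s=2$, which is slightly more careful than the paper; it cites Theorem~\ref{thm:main} ``with the $s$-dimensional result'', although that theorem is stated only for dimension at least $3$.
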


\begin{proof}
 We derive the result by projecting the set $\Omega$ onto  $s$-dimensional subspaces.
Let $\mathcal{I}$ be the collection of all $\binom{k}{s}$ subsets of $\{1, \dots, k\}$ of size $s$. For any $I \in \mathcal{I}$, let
\[
 \pi_I:\prod_{i=1}^k A_i \to \prod_{i\in I} A_i
 \]
be the coordinate projection, and write $\Omega_I:=\pi_I(\Omega)$.

The size of $\Omega$ is bounded by 
    \(
    |\Omega| \le |\Omega_I| \prod_{j \notin I} |A_j|.
    \)
Using the hypothesis $|\Omega| \ge \delta \prod_{i=1}^k |A_i|$, we obtain
    \[
    |\Omega_I| \ge \frac{|\Omega|}{\prod_{j \notin I} |A_j|} \ge \delta \prod_{i \in I} |A_i|.
    \]
By assumption, every tuple in $\Omega$ satisfies the GCD condition on the coordinates in $I$. Thus, every tuple in the projection $\Omega_I$ satisfies $\gcd(\{a_i\}_{i \in I}) \ge D$.
We can therefore apply Theorem \ref{thm:main} (with the $s$-dimensional result) to each $\Omega_I$. This yields $\binom{k}{s}$ inequalities
\begin{equation}\label{eq:pairwise_bounds}
    \prod_{i \in I} |A_i| \ll_{\varepsilon} \delta^{-\frac{s}{s-1}-\varepsilon} \frac{\prod_{i \in I} X_i}{D^s}, \quad \text{for each } I \in \mathcal{I}.
\end{equation}
We multiply the inequalities over all $I \in \mathcal{I}$.
Observe that each index $i \in \{1, \dots, k\}$ belongs to exactly $\binom{k-1}{s-1}$ subsets in $\mathcal{I}$. Consequently, 
\[
   \left( \prod_{i=1}^k |A_i| \right)^{\binom{k-1}{s-1}}\ll \delta^{-\left(\frac{s}{s-1}+\varepsilon\right)\binom{k}{s}} D^{-s\binom{k}{s}} \left( \prod_{i=1}^k X_i \right)^{\binom{k-1}{s-1}},
\]
which gives
\begin{align*}
    \prod_{i=1}^k |A_i| \ll \delta^{-\frac{k}{s-1} - \frac{k\varepsilon}{s}} \frac{\prod_{i=1}^k X_i}{D^k},
\end{align*}
as desired.
\end{proof}
\begin{remark}
    The bound is sharp with respect to the parameters $X_i$ and $D$ (ignoring the $\delta$ factor). This is evident in the case $\delta=1$: by choosing each $A_i$ as the set of multiples of $D$, we strictly satisfy the condition $\gcd\ge D$ and achieve the product size $\prod |A_i| \asymp \frac{\prod X_i}{D^k}$.
\end{remark}

\section{An Alternative Approach via the Large Sieve-Type Inequality}
In this section, we establish two large sieve-type inequalities that may be of independent interest, and proves Theorem \ref{thm:k-dim-hybrid} and Theorem \ref{thm:lcm-dim-clean}.

\subsection{The Large Sieve-Type Inequality}
We first establish the following inequality.

\begin{lemma}
\label{lem:standard_sieve}
Let $\{\xi_n\}_{n\sim X}$ be any sequence of complex numbers. For any $X, D \ge 1$, we have
\begin{equation}
\label{lem_dual} \sum_{d\sim D} \Bigg| \sum_{\substack{n\sim X \\ d \mid n}} \xi_n \Bigg|^2 \ll \left(XD^{\varepsilon-1} + D \right) \sum_{n\sim X} |\xi_n|^2.
\end{equation}
\end{lemma}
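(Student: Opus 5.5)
We prove \eqref{lem_dual} by expanding the square and treating it as a bilinear form. Writing $S := \sum_{d\sim D} \bigl|\sum_{n\sim X,\, d\mid n} \xi_n\bigr|^2$, we have
\[
S=\sum_{n,m\sim X}\xi_n\overline{\xi_m}\,\#\{d\sim D: d\mid n,\ d\mid m\},
\]
so $S$ is the quadratic form of the symmetric matrix $M_{n,m}=\#\{d\sim D: d\mid \gcd(n,m)\}$. By Schur's test, or simply by $|\xi_n\overline{\xi_m}|\le \tfrac12(|\xi_n|^2+|\xi_m|^2)$ and symmetry, we get
\[
S\le \Bigl(\max_{n\sim X}\sum_{m\sim X} M_{n,m}\Bigr)\sum_{n\sim X}|\xi_n|^2 .
\]
So it suffices to bound the row sums.

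For fixed $n\sim X$ we write
\[
\sum_{m\sim X}M_{n,m}=\sum_{\substack{d\sim D\\ d\mid n}}\#\{m\sim X: d\mid m\}.
\]
The inner count is at most $X/d+1\ll X/D+1$. The number of admissible $d$ is at most $\tau(n)$, but we need a bound of the form $D^{\varepsilon}$, not $X^{\varepsilon}$. The key observation is that the number of divisors $d\sim D$ of $n$ is at most $\tau_{D}(n):=\#\{d\mid n: d\le 2D\}$. We then need a bound like $\#\{d\mid n : d\le 2D\}\ll_\varepsilon D^{\varepsilon}$ uniformly in $n$. This is false in general: take $n$ a product of many small primes.

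So the crude row-sum approach must be refined, and this is the main obstacle. One fallback would be a weaker bound $\ll X^{\varepsilon}$ using $\tau(n)\ll X^{\varepsilon}$. But if $D$ is tiny compared to $X$, $X^\varepsilon$ is not $D^\varepsilon$, so we must be smarter. The plan is to split the row sum into two terms:
\[
\sum_{\substack{d\sim D\\ d\mid n}}\Bigl(\frac{X}{d}+1\Bigr)\le \frac{X}{D}\,\#\{d\sim D: d\mid n\}+\#\{d\sim D:d\mid n\}.
\]
The second term is trivially at most $2D$, which accounts for the $D$ in \eqref{lem_dual}. For the first term, instead of a pointwise row bound we use the Schur test with weights, i.e. we bound the contribution directly. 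By Cauchy--Schwarz applied to each $d$,
\[
\Bigl|\sum_{d\mid n}\xi_n\Bigr|^2\le \Bigl(\frac{X}{d}+1\Bigr)\sum_{d\mid n}|\xi_n|^2,
\]
and summing over $d$ gives
\[
S\ll \frac{X}{D}\sum_{n\sim X}|\xi_n|^2\,\#\{d\sim D: d\mid n\}+D\sum_n|\xi_n|^2 .
\]
The remaining issue is still the pointwise divisor count.

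If uniformity in $n$ really fails, I would instead prove the lemma with $\tau(n)$ weights, or with $D^\varepsilon$ replaced by $X^\varepsilon$ when $D\ge X^{c}$. To reach $D^{\varepsilon}$ I would try a multiplicative decomposition: write $d=d_1d_2$, with $d_1$ composed of primes $\le D^{\varepsilon/2}$ and $d_2$ of larger primes. The $d_2$ are determined among $O_\varepsilon(1)$ large prime factors of $n$ of size $\le 2D$, giving $O_\varepsilon(1)$ choices, while smooth $d_1\le 2D$ dividing $n$ are counted via Rankin's trick. I expect this counting step, getting a genuinely $D^{\varepsilon}$-uniform bound, to be the delicate point. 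I would check whether the downstream application (to $\prod A_i$ with $X_i\ge D$) tolerates $X^{\varepsilon}$ before investing in it.
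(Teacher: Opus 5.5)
Your proposal never establishes the $D^{\varepsilon}$ bound, and the route you are on cannot reach it. Schur's test with constant weights on the $n$-indexed kernel $M_{n,m}=\#\{d\sim D: d\mid \gcd(n,m)\}$ yields exactly the maximal row sum. For $D$ somewhat smaller than $X$ this row sum is of size $\frac{X}{D}\max_{n\sim X}\#\{d\sim D: d\mid n\}$, and that divisor count is genuinely not $O_\varepsilon(D^{\varepsilon})$. It can be a fixed power of $D$: about $D^{1/2+o(1)}$ if $n$ is divisible by every prime up to $(\log D)^2$, which is compatible with $n\sim X$ once $\log X\ge 2(\log D)^2$, say. So no sharper counting can rescue this step, whether Rankin's trick on a smooth part or anything else; the loss is in the choice of kernel, not in the counting. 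Your fallbacks (an $X^{\varepsilon}$ or $\tau(n)$-weighted version) prove a strictly weaker lemma. They also do not suffice downstream. In Proposition~\ref{prop:unified-dyadic} the $L^2$-bound is used precisely when $X_i\ge\Delta^{2-\eta}$, and there an $X_i^{\eta}$ loss cannot be absorbed into $\Delta^{O(\eta)}$, so the $D^{k-\varepsilon}$ in Theorem~\ref{thm:k-dim-hybrid} would be lost.

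The missing idea is duality, which moves the divisor function onto the variable of size $D$. Your sum is $\|T\xi\|^2$ with $(T\xi)_d=\sum_{n\sim X,\,d\mid n}\xi_n$, and $\|T\|=\|T^{*}\|$. It therefore suffices to show
\[
\sum_{n\sim X}\Bigl|\sum_{d\sim D,\,d\mid n}\eta_d\Bigr|^2\ll(XD^{\varepsilon-1}+D)\sum_{d\sim D}|\eta_d|^2 .
\]
Equivalently, replace your kernel $M=T^{*}T$ by $TT^{*}$, which has the same norm. Expanding now gives a kernel indexed by $d_1,d_2\sim D$:
\[
\#\{n\sim X:\lcm(d_1,d_2)\mid n\}=\frac{X\gcd(d_1,d_2)}{d_1d_2}+O(1).
\]
The $O(1)$ part contributes at most $\bigl(\sum_d|\eta_d|\bigr)^2\le D\sum_d|\eta_d|^2$. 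For the main part, Schur's test with constant weights now succeeds. Using $\gcd(d_1,d_2)=\sum_{l\mid \gcd(d_1,d_2)}\varphi(l)$,
\[
\sum_{d_2\sim D}\frac{\gcd(d_1,d_2)}{d_1d_2}=\frac{1}{d_1}\sum_{l\mid d_1}\frac{\varphi(l)}{l}\sum_{r\sim D/l}\frac{1}{r}\ll\frac{\tau(d_1)}{d_1}\ll_{\varepsilon}D^{\varepsilon-1}.
\]
The divisor bound is now applied to $d_1\le 2D$ rather than to $n\sim X$, which is exactly what your pointwise approach could not achieve.
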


\begin{proof}
By the duality principle, it suffices to prove 
\begin{equation}
    \label{eqn:large sieve} \sum_{n\sim X} \Bigg| \sum_{\substack{ d\sim D\\ d \mid n}} \eta_d \Bigg|^2 \ll \left( X D^{\varepsilon-1} + D \right) \sum_{d\sim D} |\eta_d|^2
\end{equation}
for any sequence of complex numbers $\{\eta_d\}_{d\sim D}$.
Let $S$ denote the sum on the left-hand side of \eqref{eqn:large sieve}. Expanding the square and interchanging the order of summation, we get
\begin{align*}
S &= \sum_{d_1, d_2 \sim D} \eta_{d_1} \overline{\eta_{d_2}} \sum_{\substack{n\sim X \\ [d_1, d_2] \mid n}} 1,
\end{align*}
where $[d_1, d_2]$ denotes the least common multiple of $d_1$ and $d_2$.
We can express the inner sum as $\frac{X}{[d_1, d_2]} + E(d_1, d_2)$, where the error term $|E(d_1, d_2)| \le 1$.
This splits $S$ into
\[ S = S_M + S_E, \]
where
\[ S_M = X \sum_{d_1, d_2 \sim D} \frac{\eta_{d_1} \overline{\eta_{d_2}}}{[d_1, d_2]} \quad \text{and} \quad S_E = \sum_{d_1, d_2 \sim D} \eta_{d_1} \overline{\eta_{d_2}} E(d_1, d_2). \]
We trivially bound the error term using the Cauchy-Schwarz inequality:
\begin{align*}
|S_E| &\le \sum_{d_1, d_2 \sim D} |\eta_{d_1}| |\overline{\eta_{d_2}}| = \left( \sum_{d\sim D} |\eta_{d}| \right)^2 \le D \sum_{d\sim D} |\eta_{d}|^2.
\end{align*}

For the main term, we use the identity $[d_1, d_2] = d_1 d_2 / \gcd(d_1, d_2)$.  We then apply the basic inequality $|\eta_{d_1} \overline{\eta_{d_2}}| \le \frac{1}{2}(|\eta_{d_1}|^2 + |\eta_{d_2}|^2)$. By symmetry, this cleanly is
\begin{equation}\label{eq:large_sieve_main_term}
    |S_M| \le X \sum_{d_1, d_2 \sim D} \frac{\gcd(d_1, d_2)}{d_1 d_2} |\eta_{d_1}|^2 = X \sum_{d_1 \sim D} |\eta_{d_1}|^2 \Bigg( \sum_{d_2 \sim D} \frac{\gcd(d_1, d_2)}{d_1 d_2} \Bigg).
\end{equation}
Denote the inner sum by $C(d_1)$. Using the identity $n=\sum_{d|n}\varphi(d)$, we get
\begin{align*}
   C(d_1) &= \frac{1}{d_1} \sum_{l \mid d_1} \varphi(l) \sum_{\substack{d_2 \sim D \\ l \mid d_2}} \frac{1}{d_2} = \frac{1}{d_1} \sum_{l \mid d_1} \frac{\varphi(l)}{l} \sum_{k \sim D/l} \frac{1}{k}
  \ll \frac{\tau(d_1)}{d_1}.
\end{align*}

For any fixed $\varepsilon > 0$, there exists a constant $C_\varepsilon$ such that for all integers $n \ge 1$, $\tau(n) \le C_\varepsilon n^\varepsilon$.
Applying this uniform bound for all $d_1 \in [D, 2D]$, we have
\begin{equation}\label{eq:ls_main_term_bound}
    |S_M| \ll X D^{\varepsilon-1} \sum_{d \sim D} |\eta_d|^2.
\end{equation}
Combining the bounds for $S_M$ and $S_E$ completes the proof.
\end{proof}

Now we establish the dual inequality, where the roles of divisors and multiples are interchanged.

\begin{lemma}[Dual Large Sieve Inequality]
\label{lem:dual_sieve}
Let $\{\xi_n\}_{n\sim X}$ be any sequence of complex numbers. For any $X, D \ge 1$, we have
\begin{equation}
    \label{eqn:dual_sieve}
    \sum_{d\sim D} \Bigg| \sum_{\substack{n\sim X \\ n \mid d}} \xi_n \Bigg|^2 \ll_{\varepsilon} \left( D X^{\varepsilon-1} + X \right) \sum_{n\sim X} |\xi_n|^2.
\end{equation}
\end{lemma}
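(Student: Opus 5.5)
The plan is to expand the square directly; no duality step is needed. Write the left-hand side of \eqref{eqn:dual_sieve} as
\[
T=\sum_{d\sim D}\Bigg|\sum_{\substack{n\sim X\\ n\mid d}}\xi_n\Bigg|^2=\sum_{n_1,n_2\sim X}\xi_{n_1}\overline{\xi_{n_2}}\,\#\{d\sim D:\ [n_1,n_2]\mid d\}.
\]
The inner count is a non-negative integer, and it is at most $D/[n_1,n_2]+1$, since $[D,2D]$ has length $D$. Putting absolute values on the $\xi$'s gives
\[
T\le D\sum_{n_1,n_2\sim X}\frac{|\xi_{n_1}||\xi_{n_2}|}{[n_1,n_2]}+\sum_{n_1,n_2\sim X}|\xi_{n_1}||\xi_{n_2}|=:T_M+T_E.
\]
This mirrors the proof of Lemma~\ref{lem:standard_sieve}, with the roles of $X$ and $D$ interchanged.

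The error part is handled by Cauchy--Schwarz: $T_E=\big(\sum_{n\sim X}|\xi_n|\big)^2\le X\sum_{n\sim X}|\xi_n|^2$. This produces the $X$ term of \eqref{eqn:dual_sieve}.

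For the main part, I would use $[n_1,n_2]=n_1n_2/\gcd(n_1,n_2)$ together with $|\xi_{n_1}\overline{\xi_{n_2}}|\le\frac12(|\xi_{n_1}|^2+|\xi_{n_2}|^2)$. Since the kernel is symmetric in $(n_1,n_2)$, this gives
\[
T_M\le D\sum_{n_1\sim X}|\xi_{n_1}|^2\,C(n_1),\qquad C(n_1):=\sum_{n_2\sim X}\frac{\gcd(n_1,n_2)}{n_1n_2}.
\]
As in Lemma~\ref{lem:standard_sieve}, I expand $\gcd(n_1,n_2)=\sum_{l\mid n_1,\,l\mid n_2}\varphi(l)$ and write $n_2=lk$ with $k\sim X/l$:
\[
C(n_1)=\frac1{n_1}\sum_{l\mid n_1}\frac{\varphi(l)}{l}\sum_{k\sim X/l}\frac1k\ll\frac{\tau(n_1)}{n_1}.
\]
Here the sum over $k$ is $O(1)$ uniformly in $l$: it is empty if $l>2X$, and is bounded by a constant otherwise, since it runs over a dyadic range. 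The divisor bound $\tau(n)\ll_\varepsilon n^{\varepsilon}$ with $n_1\asymp X$ then gives $C(n_1)\ll_\varepsilon X^{\varepsilon-1}$, hence $T_M\ll_\varepsilon DX^{\varepsilon-1}\sum_{n\sim X}|\xi_n|^2$. Adding $T_M$ and $T_E$ gives \eqref{eqn:dual_sieve}.

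I do not expect a genuine obstacle, because every step is elementary. The one point to check is that the bound on $C(n_1)$ is uniform over all divisors $l$ of $n_1$, including $l$ close to $X$, where the $k$-range is short and the dyadic sum is still $O(1)$. The loss $X^{\varepsilon}$ in the statement comes exactly from the divisor bound applied to $\tau(n_1)$ with $n_1\sim X$.
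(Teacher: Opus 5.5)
Your proposal is correct and is essentially the paper's argument. You expand the square, bound the number of multiples of $[n_1,n_2]$ in $[D,2D]$ by $D/[n_1,n_2]+1$, and handle the error part by Cauchy--Schwarz. You then bound the main term with the same kernel estimate $C(n_1)\ll\tau(n_1)/n_1$ from Lemma~\ref{lem:standard_sieve}, with $X$ and $D$ interchanged. The paper only cites that computation, so your explicit check that the dyadic $k$-sum is $O(1)$ uniformly in $l$ fills in a detail it leaves implicit.
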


\begin{proof}
Let $S$ denote the sum on the left-hand side. Expanding the square gives
\[
S = \sum_{d\sim D} \Bigg| \sum_{\substack{n\sim X \\ n \mid d}} \xi_n \Bigg|^2 = \sum_{n_1, n_2 \sim X} \xi_{n_1} \overline{\xi_{n_2}} \sum_{\substack{d\sim D \\ n_1 \mid d, \, n_2 \mid d}} 1.
\]
The inner sum counts the number of multiples of $\lcm(n_1, n_2)$ in the interval $[D, 2D]$. Then we have $S=S_M+S_E$ with the error term bounded trivially by the Cauchy-Schwarz inequality:
\[
|S_E| \le \sum_{n_1, n_2 \sim X} |\xi_{n_1}\overline{\xi_{n_2}}| = \left(\sum_{n\sim X} |\xi_n|\right)^2 \le X \sum_{n\sim X} |\xi_n|^2.
\]
The main term is 
\[
S_M = D \sum_{n_1, n_2 \sim X} \frac{\xi_{n_1} \overline{\xi_{n_2}}}{\lcm(n_1, n_2)} = D \sum_{n_1, n_2 \sim X} \frac{\gcd(n_1, n_2)}{n_1 n_2} \xi_{n_1} \overline{\xi_{n_2}}.
\]
We observe that this quadratic form is  identical to the main term in the proof of Lemma \ref{lem:standard_sieve} (see \eqref{eq:large_sieve_main_term}), but with the roles of the parameters and variables interchanged:
\(
X \leftrightarrow D,\  d_i \leftrightarrow n_i,\  \eta \leftrightarrow \xi.
\)
Therefore by \eqref{eq:ls_main_term_bound}, we immediately obtain the bound for our current main term:
\[
|S_M| \ll_{\varepsilon} D X^{\varepsilon-1} \sum_{n\sim X} |\xi_n|^2.
\]
Combining the bounds for $S_M$ and $S_E$. The proof is complete.
\end{proof}

\subsection{Proof of Theorem \ref{thm:k-dim-hybrid}}
In this subsection we first give the following proposition for dyadic block estimate, and then Theorem~\ref{thm:k-dim-hybrid} follows subsequently.

Let $X_1:=\min_{1\le i\le k}(X_i)$. For each dyadic parameter \(\Delta\in[D,2X_1]\), write
\[
S(\Delta):=\sum_{d\sim \Delta}\prod_{i=1}^k |A_{i,d}|.
\]

\begin{proposition}[Dyadic block estimate]\label{prop:unified-dyadic}
Let \(X_1,\Delta \) and \(S(\Delta)\) be as above.
Then
\begin{equation}\label{eq:small-block}
S(\Delta)
\ll_{k,\eta}
\Delta^{-(k-1)+2\eta}
\left(\prod_{i=1}^k X_i^{\frac{k-1}{k}}\right)
\left(\prod_{i=1}^k |A_i|^{1/k}\right).
\end{equation}
\end{proposition}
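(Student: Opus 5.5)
Throughout I read $A_{i,d}:=\{a\in A_i: d\mid a\}$, so that $S(\Delta)$ counts pairs $(d,(a_1,\dots,a_k))$ with $d\sim\Delta$ a common divisor of the $a_i$.

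The plan is to decouple the $k$ factors by H\"older and then bound each one-dimensional $k$-th moment $\sum_{d\sim\Delta}|A_{i,d}|^k$. By H\"older's inequality with $k$ equal exponents,
\[
S(\Delta)\le \prod_{i=1}^k\Big(\sum_{d\sim\Delta}|A_{i,d}|^k\Big)^{1/k}.
\]
It therefore suffices to show, for each $i$,
\begin{equation*}
\sum_{d\sim\Delta}|A_{i,d}|^k\ll_{k,\eta}\Big(\frac{X_i}{\Delta}\Big)^{k-1}\Delta^{2\eta}\,|A_i|.
\end{equation*}
Taking $k$-th roots and multiplying over $i$ then gives exactly \eqref{eq:small-block}, because $\prod_i (X_i/\Delta)^{(k-1)/k}=\Delta^{-(k-1)}\prod_i X_i^{(k-1)/k}$.

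To prove the one-dimensional moment bound, I first peel off $k-2$ factors with the pointwise bound $|A_{i,d}|\le 2X_i/d+1\ll X_i/\Delta$. This is valid because $\Delta\le 2X_1\le 2X_i$, and it gives
\[
\sum_{d\sim\Delta}|A_{i,d}|^k\ll (X_i/\Delta)^{k-2}\sum_{d\sim\Delta}|A_{i,d}|^2.
\]
It remains to show $\sum_{d\sim\Delta}|A_{i,d}|^2\ll (X_i/\Delta)\Delta^{2\eta}|A_i|$. I will split into two ranges of $\Delta$.

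\emph{Small range, $\Delta\le X_i^{1/2}$.} Apply Lemma~\ref{lem:standard_sieve} with $\xi_n=\mathbf 1_{A_i}(n)$, $X=X_i$, $D=\Delta$. This gives $\sum_{d\sim\Delta}|A_{i,d}|^2\ll (X_i\Delta^{\eta-1}+\Delta)|A_i|$. In this range $\Delta\le X_i/\Delta$, so the bound is $\ll (X_i/\Delta)\Delta^{\eta}|A_i|$, which is what we need.

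\emph{Large range, $\Delta>X_i^{1/2}$.} Here the $\Delta$ term in the large sieve is too big, so I argue directly. Write
\[
\sum_{d\sim\Delta}|A_{i,d}|^2\le \max_{d\sim\Delta}|A_{i,d}|\cdot\sum_{d\sim\Delta}|A_{i,d}| \ll \frac{X_i}{\Delta}\sum_{a\in A_i}\#\{d\sim\Delta: d\mid a\}\le\frac{X_i}{\Delta}\sum_{a\in A_i}\tau(a).
\]
The divisor bound gives $\tau(a)\ll_\eta a^{\eta}\ll X_i^{\eta}$. Since $X_i<\Delta^2$ in this range, $X_i^\eta\le\Delta^{2\eta}$, which is where the exponent $2\eta$ in the statement comes from. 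So this range also yields $(X_i/\Delta)\Delta^{2\eta}|A_i|$.

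I expect the main obstacle to be the large-$\Delta$ range. The large sieve error term $\Delta\sum|\xi_n|^2$ beats the main term once $\Delta>\sqrt{X_i}$, so there the argument must fall back on the crude divisor bound. It only works because $X_i^{\eta}$ can be converted into a power of $\Delta$, at the cost of doubling $\eta$. The other steps, namely the H\"older splitting, the pointwise bound, and the small-range application of the large sieve, are routine. Note that the conditions $\Delta\ge D$ and $\Delta\le 2X_1$ are used only to make the pointwise bound $|A_{i,d}|\ll X_i/\Delta$ valid for every $i$.
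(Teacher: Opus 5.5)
Your proof is correct and follows the paper's argument: H\"older to decouple the $k$ factors, the pointwise bound $|A_{i,d}|\ll X_i/\Delta$ to peel off $k-2$ factors, and then per index either Lemma~\ref{lem:standard_sieve} (when $X_i\ge\Delta^2$) or the divisor-bound $L^1$ estimate with $X_i^{\eta}\le\Delta^{2\eta}$ (when $X_i<\Delta^2$). The only difference is organizational: you split per index from the outset, which absorbs the paper's separate case $\Delta\le\sqrt{X_1}$ into its $I_0/I_1$ split.
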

\begin{proof}
By H\"older's inequality,
\begin{equation}\label{eq:holder-block}
S(\Delta)
=
\sum_{d\sim \Delta}\prod_{i=1}^k |A_{i,d}|
\leq
\prod_{i=1}^k
\left(
\sum_{d\sim \Delta}|A_{i,d}|^k
\right)^{1/k}.
\end{equation}

We shall repeatedly use the following three estimates.

\noindent
   {\bf The \(L^\infty\)-bound.}
For each \(i\),
\begin{equation}\label{eq:Linfty}
\max_{d\sim \Delta}|A_{i,d}|
\leq
\max_{d\sim \Delta}\#\{n\in [X_i,2X_i]:d\mid n\}
\ll \frac{X_i}{\Delta}.
\end{equation}

\noindent    
    {\bf The \(L^1\)-bound.}
For each \(i\),
\begin{align}
\sum_{d\sim \Delta}|A_{i,d}|
&=
\sum_{d\sim \Delta}\sum_{\substack{n\in A_i\\ d\mid n}}1\le \sum_{n\in A_i}\tau(n)
\ll_{\eta}
|A_i|\,X_i^{\eta}.
\label{eq:L1}
\end{align}

\noindent    
    {\bf  The \(L^2\)-bound.}
For each $i$, if \(X_i\geq \Delta^{2-\eta}\) for some small $\eta>0$, then
Lemma~\ref{lem:standard_sieve} applied  with $\xi_n=\textbf{1}_{n\in A_i}$, giving
\begin{equation}\label{eq:L2-simplified}
\sum_{d\sim \Delta}|A_{i,d}|^2=\sum_{d\sim\Delta}\Bigg|\sum_{\substack{n\sim X_i\\d|n}}\textbf{1}_{n\in A_i}\Bigg|^2
\ll_{\eta}
X_i\Delta^{\eta-1}|A_i|.
\end{equation}

Assume first that $\Delta\le\sqrt{X_1}$. Note that \eqref{eq:L2-simplified} is available in this case. To estimate the $k$th moment in \eqref{eq:holder-block}, we extract $k-2$ maximum terms.  Combining \eqref{eq:Linfty} and \eqref{eq:L2-simplified}, we have
\begin{align}
\sum_{d\sim \Delta}|A_{i,d}|^k
&\leq
\left(\max_{d\sim \Delta}|A_{i,d}|\right)^{k-2}
\sum_{d\sim \Delta}|A_{i,d}|^2  \ll_{k,\eta} X_i^{k-1}\Delta^{-(k-1-\eta)}|A_i|.
\label{eq:kth-small}
\end{align}
Hence,
\[
S(\Delta)
\ll
\prod_{i=1}^k
\left( X_i^{k-1}\Delta^{-(k-1-\eta)}|A_i|
\right)^{1/k},
\]
which satisfies \eqref{eq:small-block}.

Assume now that \(\Delta\in [\sqrt{X_1},2X_1]\), and write
\[
I_0:=\{i:X_i<\Delta^2\},\quad I_1:=\{i:X_i\ge\Delta^2\},\quad r:=|I_0|.
\]
For \(i\in I_0\), we use \eqref{eq:Linfty} and \eqref{eq:L1} to get
\begin{align}
\sum_{d\sim \Delta}|A_{i,d}|^k
&\leq
\left(\max_{d\sim \Delta}|A_{i,d}|\right)^{k-1}
\sum_{d\sim \Delta}|A_{i,d}|
\ll_{k,\eta}
X_i^{k-1+\eta}\Delta^{-(k-1)}|A_i|\notag
\\
&\ll_{k,\eta} X_i^{k-1}\Delta^{-(k-1)+2\eta}|A_i|.\notag
\end{align}
For \(i\in I_1\), since \(X_i\geq \Delta^2\geq \Delta^{2-\eta}\), we use
\eqref{eq:Linfty} and \eqref{eq:L2-simplified} to get
\begin{align}
\sum_{d\sim \Delta}|A_{i,d}|^k
&\leq
\left(\max_{d\sim \Delta}|A_{i,d}|\right)^{k-2}
\sum_{d\sim \Delta}|A_{i,d}|^2 \ll_{k,\eta} X_i^{k-1}\Delta^{-(k-1-\eta)}|A_i|.
\notag
\end{align}
Substituting these two estimates above into
\eqref{eq:holder-block}, we obtain
\begin{align*}
S(\Delta)
&\ll_{k,\eta}
\prod_{i\in I_0}
\left( X_i^{k-1}\Delta^{-(k-1)+2\eta}|A_i|\right)^{1/k}
\prod_{i\in I_1}
\left(X_i^{k-1}\Delta^{-(k-1-\eta)}|A_i|\right)^{1/k}
 \\
&\ll_{k,\eta}
\Delta^{-(k-1)+\frac{k+r}{k}\eta}
\left(\prod_{i=1}^k X_i^{\frac{k-1}{k}}\right)
\left(\prod_{i=1}^k |A_i|^{1/k}\right),
\end{align*}
as desired.
\end{proof}
\begin{proof}[Proof of Theorem~\ref{thm:k-dim-hybrid}]
Let $\Omega$ be the set of tuples with $\gcd(a_1,\dots,a_k) \geq D$. By hypothesis,
\[
\delta \prod_{i=1}^k |A_i| \leq |\Omega| \le \sum_{d \geq D}\prod_{i=1}^k |A_{i,d}|.
\]
The sum is finite as $d$ is bounded above by
$2X_1$. We partition the range of summation for $d$ into dyadic intervals $[D_j, D_{j+1})$
with $D_j = 2^j D$, for $0 \leq j \leq J = \left\lfloor \log_2(2X_1/D) \right\rfloor$. The inequality becomes
\begin{equation}
\delta \prod_{i=1}^k |A_i| \leq \sum_{j=0}^J \sum_{d \sim D_j} \prod_{i=1}^k |A_{i,d}|
=
\sum_{j=0}^J S(D_j),
\end{equation}
by the definition of \(S(\Delta)\). 

Fix $\eta=(k-1)\varepsilon/(2k)$. 
Applying Proposition~\ref{prop:unified-dyadic}, we obtain
\begin{align*}
\delta\prod_{i=1}^k |A_i|
&\ll_{k,\eta}
\left(\prod_{i=1}^k X_i^{\frac{k-1}{k}}\right)
\left(\prod_{i=1}^k |A_i|^{1/k}\right)
\sum_{j=0}^J D_j^{-(k-1)+2\eta} \\
&\ll_{k,\eta}
D^{-(k-1)+2\eta}
\left(\prod_{i=1}^k X_i^{\frac{k-1}{k}}\right)
\left(\prod_{i=1}^k |A_i|^{1/k}\right),
\end{align*}
since \((k-1)-2\eta>0\), and
\(\sum_{j\ge0} 2^{-j((k-1)-2\eta)}\) converges.
Rearranging the inequality above gives \eqref{eq:hybrid-conditional}.
\end{proof}

\subsection{Proof of Theorem \ref{thm:lcm-dim-clean}}
For the proof of Theorem \ref{thm:lcm-dim-clean}, we note that instead of applying Lemma \ref{lem:dual_sieve}, we use a direct counting method which suffices to establish a stronger bound in the LCM case.

\begin{proof}[Proof of Theorem \ref{thm:lcm-dim-clean}]
Fix $\eta = \varepsilon/k$. For each integer $l \ge 1$, let $d_{A_i}(l) := \#\{a \in A_i : a \mid l\}$. 
Since every valid tuple $(a_1,\dots,a_k)$ has a least common multiple $l \le L$, we can bound the total number of such tuples by summing over all possible common multiples $l \le L$. By hypothesis, this yields
$$
\delta \prod_{i=1}^k |A_i| \le \sum_{l \le L} \prod_{i=1}^k d_{A_i}(l).
$$
Applying Hölder's inequality to the right-hand side, we obtain
\begin{equation}\label{eq:lcm-holder-clean}
\delta \prod_{i=1}^k |A_i| \le \prod_{i=1}^k \left( \sum_{l \le L} d_{A_i}(l)^k \right)^{1/k}.
\end{equation}

To estimate the $k$-th moment, we extract $k-1$ maximum terms. Using the standard divisor bound, we have $\max_{l \le L} d_{A_i}(l) \le \max_{l \le L} \tau(l) \ll_\eta L^\eta$. For the remaining first moment, 
\[\sum_{l\le L} d_{A_i}(l) = \sum_{n\in A_i} \#\{l\le L:\ n\mid l\}\ll \frac{L}{X_i}|A_i|.\]
Thus,
$$
\sum_{l \le L} d_{A_i}(l)^k \le \Bigl(\max_{l \le L} d_{A_i}(l)\Bigr)^{k-1} \sum_{l \le L} d_{A_i}(l) \ll_\eta  L^{1+(k-1)\eta} X_i^{-1} |A_i|.
$$

Substituting this bound back into \eqref{eq:lcm-holder-clean}, we get
$$
\delta \prod_{i=1}^k |A_i| \ll_{k,\eta} \prod_{i=1}^k \left( L^{1+(k-1)\eta} X_i^{-1} |A_i| \right)^{1/k} = L^{1+(k-1)\eta} \left(\prod_{i=1}^k X_i\right)^{-1/k} \left(\prod_{i=1}^k |A_i|\right)^{1/k},
$$
which is
$$
\prod_{i=1}^k |A_i| \ll_{k,\eta} \delta^{-\frac{k}{k-1}} \frac{L^{\frac{k}{k-1} + k\eta}}{\left(\prod_{i=1}^k X_i\right)^{\frac{1}{k-1}}}
$$
as desired.
\end{proof}

\section{Sharpness of GCD and LCM bounds}\label{sec:sharp}
In this section we explain why the main upper bounds obtained in Theorem \ref{thm:main} (Theorem \ref{thm:k-dim-hybrid}) and Theorem \ref{thm:lcm-dim-clean} are essentially best possible. 
The constructions below show that, in the natural non-trivial parameter ranges, the dependence on 
\(\delta\), \(D\), \(L\), and the  \(X_i\) cannot be improved, up to absolute constants  and $\varepsilon$-loss appearing in the theorem.

\subsection{Sharpness of the GCD bound}\label{subsec:sharp-gcd}
The case \(\delta=1\) is straightforward.
Let $D$ be a large integer. For each $i=1, \dots, k$, define $A_i$ to be the set of all multiples of $D$ within the interval $[X_i, 2X_i]$.
Clearly, $\gcd(a_1, \dots, a_k) \ge D$ for every tuple in $\prod_{i=1}^k A_i$. Then
\[
\prod_{i=1}^k |A_i| \asymp \prod_{i=1}^k \frac{X_i}{D} = \frac{\prod_{i=1}^k X_i}{D^k},
\]
which matches the bound in Theorem \ref{thm:main} for $\delta=1$.

Next, we analyze the general case $\delta < 1$. For simplicity, we assume $X_i = X$ and $A_i = A$ for all $i$.
Let $D \ge \delta^{-1/(k-1)}$ be given. We define $D_0 := \left\lfloor \delta^{\frac{1}{k-1}} D \right\rfloor$.
Let 
\[
A = \{a \in [X, 2X] : D_0 \mid a \}.
\]
It is easy to see that
\begin{align*}
\prod_{i=1}^k |A_i| \asymp \left(\frac{X}{D_0}\right)^k &\asymp \left(\frac{X}{\delta^{1/(k-1)} D}\right)^k  = \delta^{-\frac{k}{k-1}} \frac{X^k}{D^k}.
\end{align*}
This matches the upper bound. It remains to verify that the proportion of tuples with $\gcd \ge D$ is approximately $\delta$.

For any tuple $(a_1, \dots, a_k) \in A^k$, we can write $a_i = D_0 m_i$ with $m_i\sim X/D_0$. The condition $\gcd(a_1, \dots, a_k) \ge D$ is equivalent to
$
\gcd(m_1, \dots, m_k) \ge D/D_0.
$
Provided that the range $X/D_0$ is sufficiently large compared to $D/D_0$ so that the distribution of the tuples $(m_1, \dots, m_k)$ is sufficiently uniform, the proportion of integer tuples with $\gcd \ge D/D_0$ is given by
\[
\mathbb{P}\left(\gcd(m_1, \dots, m_k) \ge D/D_0\right)  \asymp \sum_{g \ge D/D_0} \frac{1}{\zeta(k) g^k} 
\asymp \frac{1}{(D/D_0)^{k-1}} \asymp \delta.
\]
Thus, this construction proves that the bound in Theorem \ref{thm:main} is attainable.

\begin{remark}
    The argument above relies on the asymptotic density of tuples with a given GCD. For a fully rigorous construction that guarantees the proportion is strictly at least $\delta$, we refer the reader to see the method used in the next subsection. A similar construction can be readily adapted to the GCD case.
\end{remark}

\subsection{Sharpness of the LCM bound}\label{subsec:sharp-lcm}
In the final subsection, we show that the bound obtained in Theorem \ref{thm:lcm-dim-clean} is essentially sharp. 

\begin{proposition}[Sharpness for the LCM bound]
\label{prop:lcm-sharp-rigorous}
Let \(k\ge 2\), let \(\delta\in(0,1]\). Assume that
\(
M:=\Bigl\lceil \,c_k\delta^{-1/(k-1)}\Bigr\rceil
\) and
 \( Q:=C_k\left(L^{-1}{\prod_{i=1}^k X_i}\right)^{1/(k-1)}
\)
satisfies
\[
M\log(2M)\ll_k Q \ll_k \min_{1\le i\le k} X_i,
\]
for suitable constants \(c_k,C_k>0\) depending only on \(k\).
Then there exist sets \(A_i\subset [X_i,2X_i]\) such that
\(
\#\Bigl\{(a_1,\dots,a_k)\in \prod_{i=1}^k A_i:\ \operatorname{lcm}(a_1,\dots,a_k)\le L\Bigr\}
\ge
\delta \prod_{i=1}^k |A_i|,
\)
and
\[
\prod_{i=1}^k |A_i|
\gg_k
\delta^{-\frac{k}{k-1}}
\frac{L^{\frac{k}{k-1}}}{\bigl(\prod_{i=1}^k X_i\bigr)^{1/(k-1)}}.
\]
\end{proposition}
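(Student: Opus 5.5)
The plan is to build the $A_i$ as full progressions of multiples of a single modulus $N$, with $N$ slightly smaller than the critical scale $Q$, and then get the $\delta$ proportion of small-lcm tuples from tuples whose cofactors share a common factor $g$ of size about $M$. The heuristic is as follows. If $A_i=\{Nm: m\sim X_i/N\}$ and the cofactors $m_i$ are generic, then $\lcm(a_1,\dots,a_k)\approx \prod X_i/N^{k-1}$. This is at most $L$ exactly when $N\gtrsim Q$, and then $\prod|A_i|\asymp \prod X_i/N^k$. Taking $N\approx Q/M$ multiplies the size by $M^k\asymp \delta^{-k/(k-1)}$. It also makes the generic lcm too large by a factor $M^{k-1}$, which is compensated exactly on the tuples where $\gcd(m_1,\dots,m_k)\ge M$, and these have density $\asymp M^{-(k-1)}\asymp\delta$.

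\textbf{Step 1 (choice of sets).} Set $N:=\lfloor Q/M\rfloor$, which is $\ge1$ because $Q\gg M\log(2M)$, and define $A_i:=\{a\in[X_i,2X_i]: N\mid a\}$. Since $Q\ll\min X_i$, we have $X_i/N\ge1$, so
\[
|A_i|\asymp X_i/N\asymp X_iM/Q .
\]
Multiplying and substituting $Q^k=C_k^k\bigl(\prod X_i/L\bigr)^{k/(k-1)}$ gives
\[
\prod|A_i|\asymp_k M^k\prod X_i/Q^k\asymp_k \delta^{-k/(k-1)}L^{k/(k-1)}\Big/\Bigl(\prod X_i\Bigr)^{1/(k-1)},
\]
which is the required lower bound.

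\textbf{Step 2 (lcm condition).} Write $a_i=Nm_i$, and suppose $g:=\gcd(m_1,\dots,m_k)\in[M,2M]$. Put $m_i=gn_i$. Then
\[
\lcm(a_1,\dots,a_k)\le Ng\prod n_i\le Ng\prod_i\frac{2X_i}{Ng}=\frac{2^k\prod X_i}{N^{k-1}g^{k-1}}\le \frac{2^k\prod X_i}{(N M)^{k-1}} .
\]
Since $NM\ge Q/2$, the right-hand side is at most $2^{2k-1}\prod X_i/Q^{k-1}=2^{2k-1}C_k^{-(k-1)}L$. This is $\le L$ once $C_k$ is chosen large enough.

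\textbf{Step 3 (counting good tuples).} The events $\gcd(m_1,\dots,m_k)=g$ are disjoint for different $g$, so no inclusion–exclusion over $g$ is needed. For fixed $g$, I will count tuples $(n_1,\dots,n_k)$ with $n_i$ in an interval of length $Y_i:=X_i/(Ng)\asymp X_i/Q\gg1$ and $\gcd(n_1,\dots,n_k)=1$ by Möbius inversion:
\[
\sum_{e}\mu(e)\prod_i\Bigl(\frac{Y_i}{e}+O(1)\Bigr)=\frac{\prod Y_i}{\zeta(k)}+O\Bigl(\prod Y_i\cdot \frac{\log(2\min Y_i)}{\min Y_i}\Bigr).
\]
This is $\gg_k\prod Y_i$ provided $\min Y_i$ exceeds a large constant. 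Summing over $g\in[M,2M]$ gives
\[
\#\{\text{good tuples}\}\gg_k \sum_{g\sim M}\prod_i\frac{X_i}{Ng}\asymp M^{1-k}\prod|A_i| .
\]
Taking $c_k$ small, so that $M\approx c_k\delta^{-1/(k-1)}$, makes $M^{1-k}\gg c_k^{1-k}\delta$ large enough to exceed $\delta$, and the proportion is then at least $\delta$. The case $M$ bounded, i.e. $\delta\asymp1$, is covered the same way.

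\textbf{Main obstacle.} The main obstacle is Step 3: making the coprime count uniform with explicit constants. The error term from Möbius inversion has to be dominated by the main term. Here the hypothesis $M\log(2M)\ll_k Q\ll_k\min X_i$ is what should guarantee that each $Y_i$, and the ratio $Q/M$, is large enough. Rounding $N$ and the interval endpoints must also be tracked so that $[X_i,2X_i]$ really contains $\asymp X_i/(Ng)$ multiples of $Ng$. The remaining steps are direct computations.
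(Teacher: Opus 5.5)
Your construction is correct, but it takes a different route from the paper's.

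\textbf{The two constructions.} The paper chooses $M$ distinct primes $q_1,\dots,q_M\in[Q,2Q]$. It lets $A_i$ be the union of the multiples of the $q_j$ in $[X_i,2X_i]$, and calls a tuple good if all its entries are divisible by the same $q_j$. This gives $\operatorname{lcm}(a_1,\dots,a_k)\le q_j\prod_i (a_i/q_j)\le L$. The good tuples form a union of $M$ product sets, each of size $\asymp \prod X_i/Q^k$, and are counted by inclusion--exclusion over pairs of primes. The resulting density is $\asymp M Q^{-k}/(M/Q)^k=M^{1-k}$. You instead use a single progression with modulus $N=\lfloor Q/M\rfloor$. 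You get the factor $M^{1-k}$ from the distribution of the cofactor gcd $\gcd(m_1,\dots,m_k)$. In effect you make rigorous the heuristic construction the paper uses for the GCD problem in Section~\ref{subsec:sharp-gcd}.

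\textbf{What each route buys.} The paper's counting is entirely elementary: only divisibility counts and a two-term inclusion--exclusion, with no M\"obius sum or $\zeta(k)$. The price is needing $M$ primes in $[Q,2Q]$, which is where the hypothesis $Q\gg M\log(2M)$ comes from; it also needs $Q\gg M$ to make the pairwise overlaps negligible. Your route needs no primes. You only use $Q/M\ge 2$, to ensure $N\ge Q/(2M)\ge 1$, so the lower hypothesis could be relaxed to $Q\gg M$. Your cost is a uniform coprime-tuple count with error $O\bigl(\prod Y_i\log(2\min Y_i)/\min Y_i\bigr)$. That error is controlled by $X_i/Q$ exceeding a large constant, which the paper also needs to get $|A_i^{(j)}|\ge X_i/(4Q)$. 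So nothing is lost in the hypotheses.

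\textbf{One small repair.} Your remark that bounded $M$ is ``covered the same way'' does not hold if read literally with $g\in[M,2M]$. When $M=1$ the statement allows $\delta=1$, so every tuple must be good. Restricting to $g\in\{1,2\}$ discards all tuples whose cofactor gcd is at least $3$. The fix is immediate: your Step~2 bound $\operatorname{lcm}(a_1,\dots,a_k)\le 2^k\prod X_i/(Ng)^{k-1}$ holds for every $g\ge M$. So when $M=1$ every tuple in $\prod A_i$ satisfies $\operatorname{lcm}\le L$, and the density condition is trivial. This mirrors how the paper disposes of the case $M=1$ separately.
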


\begin{proof}
Choose \(M\) distinct primes
\(
q_1,\dots,q_M \in [Q,2Q].
\) This is possible since we are assumed \(Q\gg_k M\log(2M)\).
For each $i \in \{1, \dots, k\}$, we construct the set $A_i$ as the union of $M$ subsets
\[
    A_i = \bigcup_{j=1}^M A_i^{(j)}, \quad \text{where } A_i^{(j)} = \{ n \in [X_i, 2X_i] : q_j \mid n \}.
\]
Since \(q_j\sim Q\), we have, say
\[
\frac{X_i}{4Q}\le|A_i^{(j)}|\le \frac{2X_i}{Q}.
\]
Moreover, for \(j\neq \ell\),
\[
|A_i^{(j)}\cap A_i^{(\ell)}|
\le \frac{2X_i}{Q^2}.
\]
As \(Q\gg M\), inclusion-exclusion gives
\[
    \sum_{j=1}^M|A_i^{(j)}|\ge |A_i| \ge \sum_{j=1}^M|A_i^{(j)}|-\sum_{1\le j< l\le M}|A_i^{(j)}\cap A_i^{(l)}|\ge M \frac{X_i}{4Q} -\binom{M}{2}\frac{2X_i}{Q^2},
\]
which implies \(2MX_i/Q\ge|A_i|\ge M{X_i}/(8{Q})\) by choosing constant $C_k$ large enough.
Hence
\[
(2M)^k \frac{\prod_{i=1}^k X_i}{Q^k}\ge\prod_{i=1}^k |A_i|
\ge
M^k \frac{\prod_{i=1}^k X_i}{(8Q)^k}
\gg_k
\delta^{-\frac{k}{k-1}}
\frac{L^{\frac{k}{k-1}}}{\bigl(\prod_{i=1}^k X_i\bigr)^{1/(k-1)}}.
\]

Consider now the ``good'' tuples $(a_1, \dots, a_k)\in\prod_{i=1}^kA_i$ for which there exists some $j$ such that $a_i \in A_i^{(j)}$ for all $1\le i\le k$.
For such a ``good'' tuple, we have $a_i = q_j m_i$ with $m_i \le 2X_i/Q$. Then the least common multiple satisfies
\[
\operatorname{lcm}(a_1,\dots,a_k)
\le q_j \prod_{i=1}^k m_i
\le
2Q \prod_{i=1}^k \frac{2X_i}{Q}\le \frac{2^{k+1}}{C_k^{k-1}}L
\le L,
\]
by choosing $C_k$  large enough. 

If \(M=1\), then \(A_i=A_i^{(1)}\) for every \(i\), so every tuple in \(\prod_{i=1}^k A_i\) is good, and the desired density condition is immediate. Hence we may assume \(M\ge 2\).
By inclusion-exclusion principle again, we have the number of  ``good" tuples is at least
\begin{align*}
\left|\bigcup_{j=1}^M \prod_{i=1}^k A_i^{(j)}\right|
&\ge \sum_{j=1}^M \prod_{i=1}^k |A_i^{(j)}|
- \sum_{1\le j<\ell\le M} \left|\prod_{i=1}^k (A_i^{(j)}\cap A_i^{(\ell)})\right|\\
&\ge \frac{M}{(4Q)^k} \prod_{i=1}^k X_i-\binom{M}{2}\prod_{i=1}^k \frac{2X_i}{Q^2} \ge \frac{1}{2}\frac{M}{(4Q)^k} \prod_{i=1}^k X_i,
\end{align*}
by choosing $C_k$  large enough again.

The density of these ``good" tuples within $\prod_{i=1}^k A_i$ is therefore
\[
   \ge \frac{M (\prod_{i=1}^k X_i) / (4Q)^k}{2(2M)^k(\prod_{i=1}^kX_i) / Q^k} =  \frac{1}{2\cdot 8^kM^{k-1}}\ge \delta,
\]
where the last inequality is ensured by choosing suitable $c_k$. This confirms that the condition of having at least a proportion $\delta$ of tuples satisfying $\lcm \le L$ is met, and hence we are done.
\end{proof}


\section*{Acknowledgements}
The author would like to thank Professor Andrew Granville for his guidance and many helpful discussions and comments. The author also gratefully acknowledges the China Scholarship Council (CSC) for financial support, and the D\'epartement de math\'ematiques et de statistique, Universit\'e de Montr\'eal for providing an excellent research environment where this work was carried out.

\end{document}